\documentclass[a4paper,11pt,openany,reqno]{article}

%les packages nécessaire à la saisie d'un texte.
\usepackage[a4paper,tmargin= 4.5cm,bmargin= 4.5cm,rmargin=3.5cm,lmargin=3.5cm]{geometry}

\usepackage{calc}
\usepackage[all]{xy}
\usepackage[centertags]{amsmath}
\usepackage{latexsym}
\usepackage{amsfonts}
\usepackage{amssymb}
\usepackage{amsthm}
\usepackage{fancyhdr}
\usepackage [dvips]{epsfig}
\usepackage{newlfont}
\usepackage[latin1]{inputenc}
\usepackage[french,english]{babel}%charge les paramètre de français et d'anglais du package babel
\usepackage{graphicx}%insertion d'une figure
\usepackage{t1enc}
\usepackage {color}% insertion d'une figure à partir d'un document pdf
\usepackage{multicol}% permet de produire un document avec plusieurs colonnes sur la même page avec l'environnement muticol
\usepackage{mathpazo}

\newcommand{\dpq}{d_{p,q}}
\newcommand{\Dpq}{D_{p,q}}

\newcommand{\pqbinomial}[4]{\mbox{$
\biggl[\!\! 
\begin{array}{c}
#1\\
 #2
\end{array}\!\!\biggr]_{
\!{#3,#4}} $} }
\newcommand{\qbinomial}[3]{\mbox{$
\biggl[ 
\begin{array}{c}
#1\\
 #2
\end{array}\biggr]_{
\!{#3}} $} }

%
% selfdefined macros
%
% hypergeometric functions
% arguments:
% #1: p
% #2: q
% #3: upper
% #4: lower
% #5: x

%%%%%%%%%%%%%%%%%%%%%%%%%%%%%%%%%%%%%%%%%%%%%%%%%%%%%%%%%%%%%ù
%%%%%%%%%%% PATRICK COMMAND  %%%%%%%%%%%%%%%%%%%%%%%%%%%%%%%%%
%%%%%%%%%%%%%%%%%%%%%%%%%%%%%%%%%%%%%%%%%%%%%%%%%%%%%%%%%%%%%%

\newtheorem{remark}{Remark}%[chapter]
\theoremstyle{plain}
\newtheorem{lemma}{Lemma}%[chapter]
%[chapter]
\newtheorem{proposition}{Proposition}%[chapter]
\newtheorem{theorem}{Theorem}%[chapter]
\newtheorem{definition}{{Definition}}%[chapter]
\newtheorem{corollary}{Corollary}%[chapter]
%[chapter]
%[chapter]

\makeatletter %\usepackage{showlabels}
\def \be {\begin{equation}}
\def \ee {\end{equation}}

\def \btb {\begin{tablen}}
\def \etb {\end{tablen}}

\def \bea {\begin{eqnarray}}
\def \eea {\end{eqnarray}}
\def \bean {\begin{eqnarray*}}
\def \eean {\end{eqnarray*}}
\def \bd {\begin{definition}}
\def \ed {\end{definition}}
\def \bl {\begin{lemma}}
\def \el {\end{lemma}}
\def \bcon {\begin{conjecture}}
\def \econ {\end{conjecture}}
\def \bp {\begin{proposition}}
\def \ep {\end{proposition}}
\def  \br {\begin{remark}}
\def \er {\end{remark}}
\def \bt {\begin{theorem}}
\def \et {\end{theorem}}
\def \bc {\begin{corollary}}
\def \ec {\end{corollary}}
\def \been {\begin{enumerate}}
\def \een {\end{enumerate}}
\def \bit {\begin{itemize}}
\def \eit {\end{itemize}}

%les environnements....

\addtolength{\hoffset}{-1cm} \addtolength{\textwidth}{2cm}
\addtolength{\voffset}{-2.1cm} \addtolength{\textheight}{2cm}
\textwidth=460pt

\title{On the fundamental theorem of $(p,q)$-calculus \\ and some $(p,q)$-Taylor formulas}
\author{{\textbf{P. Njionou Sadjang\footnote{Email adress: \texttt{pnjionou@yahoo.fr}}} }}

\begin{document}

\maketitle

\noindent 
\begin{abstract}
\noindent% \hrulefill\\
In this paper, the $(p,q)$-derivative and the $(p,q)$-integration are investigated. Two suitable polynomials bases for the $(p,q)$-derivative are provided and various properties of these bases are given. As application, two $(p,q)$-Taylor formulas for polynomials are given, the fundamental theorem of $(p,q)$-calculus is included and the formula of $(p,q)$-integration by part is proved.\\
\hrulefill\\
\noindent{\it Keywords:}  $(p,q)$-derivative, $(p,q)$-integration, $(p,q)$-Taylor formula, fundamental theorem, $(p,q)$-integration by part.\\

\noindent {\bf AMS Subject Classification (2010)}: 33D15, 33D25, 33D35.

\end{abstract}

%\begin{classcode} 

%\end{classcode}
%\bigskip

\section{Introduction}

\noindent The Taylor formula for polynomials $f(x)$ evaluates the coefficients $f_k$ in the expansion
\begin{equation}\label{tay1}
f(x)=\sum_{k=0}^{\infty}f_k(x-c)^k,\quad f_k=\dfrac{f^{(k)}(c)}{k!}.
\end{equation} 
It is possible to generalize (\ref{tay1}) by considering other polynomial bases and suitable operators.

\noindent The fundamental theorem of calculus can be stated as follows.
\begin{theorem}
If $f$ is a continuous function on an interval $(a;b)$, then $f$ has an antiderivative on $(a;b)$. Moreover, if $F$ is any antiderivative of $f$ on $(a;b)$, then 
\begin{equation}
  \int_{a}^{b}f(x)dx=F(b)-F(a).
\end{equation}
\end{theorem}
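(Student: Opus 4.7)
The plan is to split the argument into two halves: first construct an explicit antiderivative, then use uniqueness of antiderivatives up to an additive constant to deduce the evaluation formula. The construction is the classical one: for $x$ in the interval, set
$$G(x):=\int_{a}^{x}f(t)\,dt,$$
which is well-defined because a function continuous on a compact interval is Riemann integrable on every closed subinterval.

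The main step is to show that $G'(x)=f(x)$. Fixing $x_{0}$ in the open interval, I would form the difference quotient and use additivity of the integral over adjacent intervals to write
$$\frac{G(x_{0}+h)-G(x_{0})}{h}-f(x_{0})=\frac{1}{h}\int_{x_{0}}^{x_{0}+h}\bigl(f(t)-f(x_{0})\bigr)\,dt.$$
Continuity of $f$ at $x_{0}$ supplies, for each $\varepsilon>0$, a $\delta>0$ with $|t-x_{0}|<\delta\Rightarrow|f(t)-f(x_{0})|<\varepsilon$; the standard estimate $\left|\int g\right|\leq\int|g|$ then bounds the right-hand side by $\varepsilon$ whenever $0<|h|<\delta$. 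Sending $h\to 0$ yields $G'(x_{0})=f(x_{0})$, so $G$ is an antiderivative of $f$, establishing the first assertion.

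For the second half, let $F$ be any antiderivative of $f$ on the interval and consider $H:=F-G$. Then $H'\equiv 0$, so a standard corollary of the mean value theorem forces $H$ to be constant on the connected interval. Evaluating at the two endpoints and rearranging gives
$$F(b)-F(a)=G(b)-G(a)=\int_{a}^{b}f(x)\,dx,$$
which is exactly the claimed formula.

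The step I expect to be most delicate is the epsilon-delta estimate for the differentiability of $G$, because it must be carried out symmetrically for $h$ of either sign while keeping the bounds of integration in the correct order (with the identity $\int_{x_{0}+h}^{x_{0}}=-\int_{x_{0}}^{x_{0}+h}$ when $h<0$). A secondary but essential companion fact is the Riemann integrability of continuous functions on compact intervals, so that $G$ and the final integral are even defined; one typically needs continuity on the closed interval $[a,b]$ rather than only on the open interval $(a,b)$ as literally stated, or one handles the endpoints through one-sided limits of $F$.
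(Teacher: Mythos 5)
Your proof is correct and is the standard textbook argument (define $G(x)=\int_a^x f$, prove $G'=f$ via the $\varepsilon$--$\delta$ estimate on the difference quotient, then conclude by the mean value theorem that any two antiderivatives differ by a constant). The paper states this classical theorem without proof, purely as background motivation for its $q$- and $(p,q)$-analogues, so there is nothing to compare against; your closing remark that continuity on the closed interval $[a,b]$ (rather than only on $(a,b)$ as literally written) is needed for $G$ and $\int_a^b f$ to be defined is a legitimate and correctly handled point.
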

\noindent The $q$ version of this theorem was stated in \cite{kac} as follows.
\begin{theorem} 
If $F(x)$ is an antiderivative of $f(x)$ and if $F(x)$ is continuous at $x=0$, then
\begin{equation}\label{qft}
 \int_a^b f(x)d_qx=F(b)-F(a),\quad 0\leq a\leq b\leq\infty.
\end{equation}
\end{theorem}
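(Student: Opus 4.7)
The plan is to exploit the explicit series definition of the Jackson $q$-integral, namely $\int_0^a f(x)\,d_q x = (1-q)a\sum_{n=0}^{\infty}q^n f(q^n a)$, and to let the telescoping structure of a geometric sampling do the work. First I would reduce the statement $\int_a^b f(x)\,d_q x = F(b)-F(a)$ to the case $a=0$ by invoking the additivity property $\int_a^b = \int_0^b - \int_0^a$ of the $q$-integral, so that only the identity $\int_0^c f(x)\,d_q x = F(c)-F(0)$ needs to be established for a generic endpoint $c\in(0,\infty)$.

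Next I would substitute $f(x)=D_q F(x) = \dfrac{F(qx)-F(x)}{(q-1)x}$ into the Jackson sum, evaluated at the nodes $x=q^n c$. The factor $(1-q)$ coming from the integral and the factor $(q-1)q^n c$ in the denominator of $D_q F(q^n c)$ cancel, along with the $q^n$ and $c$, so the summand collapses to $F(q^n c) - F(q^{n+1} c)$. This is the crucial telescoping, and it turns the formal infinite series into
\begin{equation*}
\int_0^c f(x)\,d_q x \;=\; \sum_{n=0}^{\infty}\bigl(F(q^n c)-F(q^{n+1}c)\bigr).
\end{equation*}

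The last step is to take the partial sum up to $N$, obtaining $F(c)-F(q^{N+1}c)$, and let $N\to\infty$. Since $0<q<1$ forces $q^{N+1}c\to 0$, the continuity hypothesis of $F$ at $x=0$ guarantees $F(q^{N+1}c)\to F(0)$, yielding $\int_0^c f(x)\,d_q x = F(c)-F(0)$, from which the general formula follows by subtraction. The main obstacle, and the only place where the hypothesis is really used, will be the last passage to the limit: without continuity of $F$ at the origin the telescoping sum has no reason to converge to $F(c)-F(0)$, so I would be careful to justify the interchange of limit and evaluation there, and also to mention that convergence of the Jackson series itself (needed to make $\int_0^c f(x)\,d_q x$ well defined) must be assumed or deduced from the regularity of $f$, as is standard in \cite{kac}.
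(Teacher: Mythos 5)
Your proof is correct and takes essentially the same route as the paper's treatment of the analogous $(p,q)$-statement: substitute $f=D_qF$ into the Jackson sum, telescope the partial sums to $F(c)-F(q^{N+1}c)$, and use continuity of $F$ at $0$ to pass to the limit, then obtain $\int_a^b$ by subtraction. The only detail left implicit is the endpoint $b=\infty$, which requires the separate definition of the improper $q$-integral, but this does not affect the core argument.
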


\noindent Here the $q$-integral is defined by 
\begin{equation}
\int_0^a f(x)d_qx=(1-q)a\sum_{k=0}^{\infty}q^kf(aq^k).
\end{equation}

\noindent In this paper, two generalizations of (\ref{tay1}) are given and a generalization of (\ref{qft}) is stated.
The paper is organised as follows.
\begin{itemize}
  \item In Section {\bf 2}, we introduce and give relevant properties of the $(p,q)$-derivative. The $(p,q)$-power basis is given and main of its properties are provided. The properties of the $(p,q)$-derivative combined with those of the $(p,q)$-power basis enable to state two $(p,q)$-Taylors for polynomials. It then follows connection formulas between the canonical basis and the $(p,q)$-power basis.   
  \item In Section {\bf 3}, the $(p,q)$-antiderivative, the $(p,q)$-integral are introduced and sufficient condition for their convergence are investigated. Finally the fundamental theorem of $(p,q)$-calculus is proved and the formula of $(p,q)$-integration by part is derived.
\end{itemize}

%Investigation of representations of two-parametric quantum groups leads to two-parametric differentiation and integration which are generalizations of the well known $q$-differentiation and $q$-integration. The $q$-analysis ($q$-differentiation, $q$-integration...) and $q$-special functions were discovered at the begining of the last century. In the last twenty years, the new mathematical objects of the theory of symmetries appeared. They are quantum groups and quantum algebras. Investigations of representations of these groups and algebras showed that these representations are related to $q$-special functions.
%
% In  \cite{burban}, Burban and Klimyk studied 

%\subsection{}

\section{The $(p,q)$-derivative and the $(p,q)$-power basis}

\noindent In this section, we introduce the $(p,q)$-derivative, the $(p,q)$-power and provide some of their relevant properties. Two $(p,q)$-Taylor formulas for polynomials are stated and some consequences are investigated.

\subsection{The $(p,q)$-derivative}

\noindent Let $f$ be a function defined on the set of the complex numbers.
\begin{definition}
The $(p,q)$-derivative of  the function $f$  is defined as  (see e.g. \cite{Jagannathan2006,desire})   
\begin{equation}
\Dpq f(x)=\dfrac{f(px)-f(qx)}{(p-q)x},\quad x\neq0,
\end{equation}
and $(\Dpq f)(0)=f'(0)$,
provided that $f$ is differentiable at $0$.
 The so-called $(p,q)$-bracket or  twin-basic number is defined as
\begin{equation}\label{pqnumber}
[n]_{p,q}=\frac{p^n-q^n}{p-q}.
\end{equation}
\end{definition}

\noindent It happens clearly that  $\Dpq x^n=[n]_{p,q}x^{n-1}$. Note also that for $p=1$, the $(p,q)$-derivative reduces to the Hahn derivative given by
\[D_qf(x)=\dfrac{f(x)-f(qx)}{(1-q)x},\quad x\neq 0.\]

\noindent As with ordinary derivative, the action of the $(p,q)$-derivative of a function is a linear operator. More precisely, for any constants $a$ and $b$,
\begin{eqnarray*}
\Dpq (af(x)+bg(x))&=&a\Dpq f(x)+b\Dpq g(x).
\end{eqnarray*}

\noindent The twin-basic number is a natural generalization of the $q$-number, that is
\begin{equation}
\lim\limits_{p\to 1}[n]_{p,q}=[n]_q=\frac{1-q^n}{1-q},\quad q\neq 1.
\end{equation}
\noindent The $(p,q)$-factorial is defined by 
\begin{equation}
[n]_{p,q}!=\prod_{k=1}^{n}[k]_{p,q}!,\quad n\geq 1,\quad [0]_{p,q}!=1.
\end{equation}
\noindent Let us introduce also the so-called $(p,q)$-binomial coefficient
\begin{equation}\label{pqbin}
\pqbinomial{n}{k}{p}{q}=\dfrac{[n]_{p,q}!}{[k]_{p,q}![n-k]_{p,q}!}, \quad 0\leq k\leq n.
\end{equation}
are called $(p,q)$-binomial coefficients. Note that as $p\to 1$, the $(p,q)$-binomial coefficients reduce to the $q$-binomial coefficients.

\begin{proposition}
The $(p,q)$-derivative fulfils the following product rules
\begin{eqnarray}
\Dpq (f(x)g(x))&=& f(px)\Dpq g(x)+g(qx)\Dpq f(x),\label{productrule2}\\
\Dpq (f(x)g(x))&=& g(px)\Dpq f(x)+f(qx)\Dpq g(x)\label{productrule3}
\end{eqnarray}
\end{proposition}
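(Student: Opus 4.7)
The plan is to work directly from the definition of $\Dpq$ and carry out a standard \emph{add--subtract} trick in the numerator, the same one used to derive the classical and $q$-Leibniz rules. This reduces the proof to pure algebra.

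First I would expand the left-hand side of \eqref{productrule2} as
\[
\Dpq(f(x)g(x)) = \frac{f(px)g(px) - f(qx)g(qx)}{(p-q)x},
\]
and then insert the mixed term $\pm f(px)g(qx)$ in the numerator. Regrouping gives
\[
\Dpq(f(x)g(x)) = f(px)\,\frac{g(px)-g(qx)}{(p-q)x} + g(qx)\,\frac{f(px)-f(qx)}{(p-q)x},
\]
and recognizing each quotient as a $(p,q)$-derivative yields \eqref{productrule2}.

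For \eqref{productrule3} I would run the same argument with the \emph{other} mixed term $\pm f(qx)g(px)$ inserted in the numerator; this swaps the roles of $p$ and $q$ in the factoring and produces the symmetric identity. Equivalently, one can simply swap $f$ and $g$ in \eqref{productrule2} since multiplication is commutative, obtaining \eqref{productrule3} immediately.

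No step looks like an obstacle here; the only point that requires a small sanity check is the case $x=0$, where the definition uses $(\Dpq f)(0)=f'(0)$. At $x=0$ both sides reduce to the ordinary Leibniz rule $(fg)'(0)=f(0)g'(0)+g(0)f'(0)$, which matches either formula since $f(p\cdot 0)=f(q\cdot 0)=f(0)$ and similarly for $g$. Thus both identities hold on the whole domain.
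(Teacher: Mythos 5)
Your proof is correct and follows the same route as the paper: insert the mixed term $\pm f(px)g(qx)$ in the numerator, regroup, and obtain the second identity by symmetry (swapping $f$ and $g$). The added check at $x=0$ is a harmless bonus the paper omits.
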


\begin{proof}
From the definition of the $(p,q)$-derivative, we have
\begin{eqnarray*}
\Dpq (f(x)g(x))&=&\frac{f(px)g(px)-f(qx)g(qx)}{(p-q)x}\\
&=&\frac{f(px)[g(px)-g(qx)]+g(qx) [f(px)-f(qx)]}{(p-q)x}\\
&=&f(px)\Dpq g(x)+g(qx)\Dpq f(x).
\end{eqnarray*}
This proves (\ref{productrule2}). (\ref{productrule3}) is obtained by symmetry. 
\end{proof}

\begin{proposition}
The $(p,q)$-derivative fulfils the following product rules
\begin{eqnarray}
\Dpq \left(\frac{f(x)}{g(x)}\right)&=&\dfrac{g(qx)\Dpq f(x)-f(qx)\Dpq g(x)}{g(px)g(qx)}\label{quotient1}\\
\Dpq \left(\frac{f(x)}{g(x)}\right)&=&\dfrac{g(px)\Dpq f(x)-f(px)\Dpq g(x)}{g(px)g(qx)}\label{quotient2}
\end{eqnarray}
\end{proposition}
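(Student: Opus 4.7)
The plan is to work directly from the definition of $\Dpq$. Putting the two fractions over a common denominator gives
$$\Dpq\!\left(\frac{f(x)}{g(x)}\right)=\frac{1}{(p-q)x}\left(\frac{f(px)}{g(px)}-\frac{f(qx)}{g(qx)}\right)=\frac{f(px)g(qx)-f(qx)g(px)}{(p-q)x\,g(px)g(qx)},$$
so the whole task reduces to rewriting the numerator $N(x)=f(px)g(qx)-f(qx)g(px)$ in two different but equivalent ways, each producing one of the two advertised formulas.

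For (\ref{quotient1}) I would add and subtract the mixed term $f(qx)g(qx)$, obtaining
$$N(x)=g(qx)\bigl(f(px)-f(qx)\bigr)-f(qx)\bigl(g(px)-g(qx)\bigr).$$
Dividing by $(p-q)x$ immediately collapses the first bracket to $\Dpq f(x)$ and the second to $\Dpq g(x)$, which, combined with the denominator $g(px)g(qx)$, is precisely (\ref{quotient1}). For (\ref{quotient2}) I would instead add and subtract $f(px)g(px)$, getting
$$N(x)=g(px)\bigl(f(px)-f(qx)\bigr)-f(px)\bigl(g(px)-g(qx)\bigr),$$
and conclude in exactly the same fashion.

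A non-computational alternative is to reuse the product rules already established. Setting $h(x)=f(x)/g(x)$ so that $f(x)=h(x)g(x)$, one application of (\ref{productrule2}) gives $\Dpq f(x)=h(px)\Dpq g(x)+g(qx)\Dpq h(x)$, and solving for $\Dpq h$ and substituting $h(px)=f(px)/g(px)$ yields (\ref{quotient2}); similarly (\ref{productrule3}) leads to (\ref{quotient1}). Either path is entirely routine; the only mild point of care is the tacit hypothesis $g(px)g(qx)\neq 0$ on the domain under consideration, needed both to form $f/g$ and to perform the final division. There is no real obstacle here: the proof is in essence an algebraic identity, and the only thing to get right is the correct telescoping split of the numerator.
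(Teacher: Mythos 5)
Your proof is correct, and in fact more detailed than the paper's, whose entire proof is the one-line remark that the statements ``can be deduced using (\ref{productrule2})'' --- precisely the second route you sketch (write $f=hg$ with $h=f/g$, apply the product rule, and solve for $\Dpq h$). Your primary route, the direct telescoping of the numerator $f(px)g(qx)-f(qx)g(px)$ by inserting $\pm f(qx)g(qx)$ or $\pm f(px)g(px)$, is equally valid and checks out; the caveat about $g(px)g(qx)\neq 0$ is appropriate and is left tacit in the paper as well.
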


\begin{proof}The proof of this statements can be deduced using (\ref{productrule2}).
%In order to get those quotient rules, we remark that $f(x)=g(x)\dfrac{f(x)}{g(x)}$. Thus, applying (\ref{productrule2}) to this relation we get 
%\[\Dpq f(x)=g(px)\Dpq \left(\frac{f(x)}{g(x)}\right)+\frac{f(qx)}{g(qx)}\Dpq g(x),\] and thus (\ref{quotient1}). By the same manner, applying (\ref{quotient2}) provides (\ref{productrule3}).
\end{proof}

\subsection{The $(p,q)$-power basis}
\noindent Here, we introduce the so-called $(p,q)$-power and investigate some of its relevant properties. \\
The expression 
\begin{equation}
 (x\ominus a)_{p,q}^n=(x-a)(px-aq)\cdots (px^{n-1}-aq^{n-1})
\end{equation}
is called the $(p,q)$-power. These polynomials will be useful to state our Taylor formulas.

\begin{proposition}
The following assertion is valid.
\begin{equation}\label{derule1}
\Dpq (x\ominus a)_{p,q}^n=[n]_{p,q}(px\ominus a)_{p,q}^{n-1}, \quad n\geq 1,
\end{equation}
and $\Dpq (x\ominus a)_{p,q}^0=0$.
\end{proposition}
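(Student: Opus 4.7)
The plan is to proceed by induction on $n$, using the Leibniz-type product rule (\ref{productrule2}) together with the factorization
\[
 (x\ominus a)_{p,q}^n = (x\ominus a)_{p,q}^{n-1}\cdot(p^{n-1}x - aq^{n-1}).
\]
The base case $n=1$ is immediate: $(x\ominus a)_{p,q}^1 = x-a$ has $(p,q)$-derivative $1$, while $[1]_{p,q}(px\ominus a)_{p,q}^0 = 1\cdot 1 = 1$.

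For the inductive step, I would set $f(x)=(x\ominus a)_{p,q}^{n-1}$ and $g(x)=p^{n-1}x-aq^{n-1}$, and apply (\ref{productrule2}) to $fg$. The derivative of $g$ is just $p^{n-1}$, and the induction hypothesis gives $\Dpq f(x) = [n-1]_{p,q}(px\ominus a)_{p,q}^{n-2}$. The essential bookkeeping step is to read the shift $x\mapsto px$ off directly from the definition, namely $f(px) = (px\ominus a)_{p,q}^{n-1}$, and to rewrite
\[
 g(qx)\cdot (px\ominus a)_{p,q}^{n-2} = q(p^{n-1}x - aq^{n-2})(px\ominus a)_{p,q}^{n-2} = q(px\ominus a)_{p,q}^{n-1},
\]
since $(p^{n-1}x - aq^{n-2})$ is precisely the last factor missing from $(px\ominus a)_{p,q}^{n-2}$ to complete $(px\ominus a)_{p,q}^{n-1}$.

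Combining these, the product rule yields
\[
 \Dpq(x\ominus a)_{p,q}^n = \bigl(p^{n-1} + q\,[n-1]_{p,q}\bigr)(px\ominus a)_{p,q}^{n-1},
\]
so the proof is completed by the elementary identity $p^{n-1} + q[n-1]_{p,q} = [n]_{p,q}$, which one checks by clearing denominators in the definition (\ref{pqnumber}).

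The only real obstacle is keeping the dilation shifts straight: one must recognize that $g(qx)$ absorbs exactly the one factor needed to upgrade $(px\ominus a)_{p,q}^{n-2}$ into $(px\ominus a)_{p,q}^{n-1}$, with the leftover $q$ providing the coefficient that combines cleanly with $p^{n-1}$ to produce the $(p,q)$-bracket $[n]_{p,q}$. Once this alignment is seen, the rest is purely formal.
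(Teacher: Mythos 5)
Your proof is correct. The paper itself only says ``the proof follows by a direct computation,'' so there is no detailed argument to match against; what it presumably intends is the non-inductive route: writing $h(x)=(x\ominus a)_{p,q}^{n}=\prod_{k=0}^{n-1}(p^{k}x-q^{k}a)$, one factors
\[
h(px)=(px\ominus a)_{p,q}^{n-1}\,(p^{n}x-q^{n-1}a),\qquad
h(qx)=q^{n-1}(qx-a)\,(px\ominus a)_{p,q}^{n-1},
\]
so that $h(px)-h(qx)=(p^{n}-q^{n})x\,(px\ominus a)_{p,q}^{n-1}$ and the result drops out after dividing by $(p-q)x$. Your inductive argument via the product rule (\ref{productrule2}) is an equally valid alternative, and every step checks out: the base case, the identification $f(px)=(px\ominus a)_{p,q}^{n-1}$, the observation that $g(qx)=q(p^{n-1}x-aq^{n-2})$ supplies exactly the top factor $p^{n-2}(px)-q^{n-2}a$ of $(px\ominus a)_{p,q}^{n-1}$, and the bracket identity $p^{n-1}+q[n-1]_{p,q}=[n]_{p,q}$. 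The direct computation is shorter and avoids induction; your version makes the dilation bookkeeping explicit and reuses the already-established product rule, which is arguably more instructive. (One cosmetic point: you silently read the paper's displayed definition $(x-a)(px-aq)\cdots(px^{n-1}-aq^{n-1})$ as $(x-a)(px-aq)\cdots(p^{n-1}x-aq^{n-1})$; that is clearly the intended meaning, but it is worth flagging the typo.)
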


\begin{proof} The proof follows by a direct computation.
%The assertion $\Dpq (x\ominus a)_{p,q}^0=0$ is obvious. Let $n\geq 1$, then we have
%\begin{eqnarray*}
%(px\ominus a)_{p,q}^n&=&(p^nx-aq^{n-1})(px\ominus a)_{p,q}^{n-1}\\
%(qx\ominus a)_{p,q}^n&=&(qx-a)\prod_{k=1}^{n-1}(qxp^k-aq^k)\\
%&=& (qx-a)\prod_{k=0}^{n-2}(qxp^{k+1}-aq^{k+1})\\
%&=&(qx-a)q^{n-1}\prod_{k=0}^{n-2}(pxp^k-aq^k)\\
%&=&(xq^n-aq^{n-1})(px\ominus a)_{pq}^{n-1}.
%\end{eqnarray*}
%Therefore, it follows that
%\begin{eqnarray*}
% \Dpq (x\ominus a)_{p,q}^n&=&\frac{\left((p^nx-aq^{n-1})-(xq^n-aq^{n-1})\right)}{(p-q)x}(px\ominus a)_{pq}^{n-1}\\
% &=&[n]_{p,q}(px\ominus a)_{pq}^{n-1}.
%\end{eqnarray*}
\end{proof}

\begin{proposition}
Let $\gamma$ be a complex number and $n\geq 1$ be an integer, then
\begin{equation}\label{derule2}
\Dpq (\gamma x\ominus a)_{p,q}^n=\gamma [n]_{p,q}(\gamma px\ominus a)_{p,q}^{n-1}.
\end{equation}
\end{proposition}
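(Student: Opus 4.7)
The natural approach is to reduce this to the previously established identity \eqref{derule1} by exploiting a simple scaling property of $\Dpq$. The key observation is that for any function $f$,
\[
\Dpq\bigl[f(\gamma x)\bigr] = \frac{f(\gamma p x)-f(\gamma q x)}{(p-q)x} = \gamma\cdot\frac{f(\gamma p x)-f(\gamma q x)}{(p-q)(\gamma x)} = \gamma\,(\Dpq f)(\gamma x).
\]
So my plan is first to record this one-line scaling rule (it is just the definition of $\Dpq$ together with a cancellation of $\gamma$), and then to apply it to the specific function $f(y)=(y\ominus a)_{p,q}^n$.

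Next I would invoke the previous proposition, equation \eqref{derule1}, which gives $(\Dpq f)(y) = [n]_{p,q}(py\ominus a)_{p,q}^{n-1}$. Evaluating at $y=\gamma x$ yields
\[
(\Dpq f)(\gamma x) = [n]_{p,q}(p\gamma x\ominus a)_{p,q}^{n-1},
\]
and combining with the scaling identity produces exactly
\[
\Dpq(\gamma x\ominus a)_{p,q}^n = \gamma\,(\Dpq f)(\gamma x) = \gamma\,[n]_{p,q}(\gamma p x\ominus a)_{p,q}^{n-1},
\]
which is \eqref{derule2}.

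If one prefers a self-contained derivation, a direct alternative is induction on $n$: write $(\gamma x\ominus a)_{p,q}^n = (\gamma x-a)(\gamma p x\ominus a)_{p,q}^{n-1}/\text{(appropriate shift)}$, or more cleanly $(\gamma x\ominus a)_{p,q}^{n+1}=(\gamma x-a)\bigl(\gamma p x\ominus aq\bigr)_{p,q}^{n}$ type recursions, and apply the product rule \eqref{productrule2}. I do not anticipate any real obstacle here: the scaling identity makes the computation essentially one line, and the only minor care needed is to keep track that shifting $x\mapsto\gamma x$ inside $(\cdot\ominus a)_{p,q}^{n-1}$ produces $(p\gamma x\ominus a)_{p,q}^{n-1}$, matching the right-hand side of \eqref{derule2}.
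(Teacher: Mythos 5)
Your argument is correct. The scaling identity $\Dpq\bigl[f(\gamma x)\bigr]=\gamma\,(\Dpq f)(\gamma x)$ is an immediate consequence of the definition of $\Dpq$, and since $f(y)=(y\ominus a)_{p,q}^{n}$ evaluated at $y=\gamma x$ is exactly $(\gamma x\ominus a)_{p,q}^{n}$, combining it with (\ref{derule1}) gives (\ref{derule2}) in one line. This is a genuinely different route from the paper's: the paper redoes the direct computation of the proof of (\ref{derule1}) with $\gamma x$ in place of $x$, whereas you treat (\ref{derule1}) as a black box and transport it by the substitution $x\mapsto\gamma x$. Your reduction is cleaner and makes transparent where the extra factor of $\gamma$ comes from (the denominator $(p-q)x$ versus $(p-q)\gamma x$); the paper's direct computation is self-contained but repeats work. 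The only points worth a sentence of care, which do not affect correctness, are the degenerate cases $\gamma=0$ (both sides vanish) and $x=0$ (where $\Dpq$ is defined via the ordinary derivative, and the identity still holds by continuity or by a separate trivial check); your main computation implicitly assumes $\gamma x\neq 0$ when cancelling $\gamma$.
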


\begin{proof}
The proof is done exactly as the proof of  (\ref{derule1}).
\end{proof}
We now generalize (\ref{derule1}) in the following proposition.

\begin{proposition}
Let $n\geq 1$ be an integer, and $0\leq k\leq n$, the following rule applies
\begin{equation}\label{derule3}
\Dpq ^k(x\ominus a)_{p,q}^{n}=p^{\binom{k}{2}}\frac{[n]_{p,q}!}{[n-k]_{p,q}!}(p^kx\ominus a)_{p,q}^{n-k}.
\end{equation} 
\end{proposition}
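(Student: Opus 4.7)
The plan is to prove the identity by induction on $k$, using (\ref{derule1}) as the base ingredient and (\ref{derule2}) as the engine of the inductive step. This is natural because the right-hand side already features $(p^k x\ominus a)_{p,q}^{n-k}$, i.e.\ a $(p,q)$-power evaluated at a scaled argument $\gamma x$ with $\gamma=p^k$, which is precisely the object whose $\Dpq$-image was computed in (\ref{derule2}).

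For the base case $k=0$, the claim reads $(x\ominus a)_{p,q}^n = p^{0}\cdot 1\cdot (x\ominus a)_{p,q}^n$, which is trivial (and the case $k=1$ is exactly (\ref{derule1})). For the inductive step I assume the formula at level $k$ and apply $\Dpq$ once more. By linearity I only need
\[
\Dpq (p^k x\ominus a)_{p,q}^{n-k}.
\]
Applying (\ref{derule2}) with $\gamma=p^k$ gives
\[
\Dpq (p^k x\ominus a)_{p,q}^{n-k} = p^{k}[n-k]_{p,q}\,(p^{k+1}x\ominus a)_{p,q}^{n-k-1}.
\]
Multiplying by the prefactor $p^{\binom{k}{2}}\frac{[n]_{p,q}!}{[n-k]_{p,q}!}$ from the induction hypothesis, the factorial ratio telescopes as $\frac{[n]_{p,q}!}{[n-k]_{p,q}!}\cdot[n-k]_{p,q}=\frac{[n]_{p,q}!}{[n-k-1]_{p,q}!}$, and the $p$-powers combine via the binomial identity $\binom{k}{2}+k=\binom{k+1}{2}$. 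This yields exactly the claim at level $k+1$.

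There is no real obstacle here — the only care needed is the bookkeeping of the two auxiliary factors: the factorial ratio (which telescopes cleanly) and the accumulation of the powers of $p$. The latter is the only spot where a careless computation could go astray, since at each step the argument $x$ picks up an extra factor of $p$ (so the differentiated $\gamma$ jumps from $p^k$ to $p^{k+1}$), and this shift is precisely what makes the exponent of $p$ in the prefactor grow from $\binom{k}{2}$ to $\binom{k+1}{2}$. Once one notes the identity $\binom{k}{2}+k=\binom{k+1}{2}$, the induction closes without further work.
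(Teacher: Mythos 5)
Your proof is correct and follows the same route as the paper, whose proof of this proposition consists only of the remark that it is done by induction on $k$; you have simply supplied the details (the application of (\ref{derule2}) with $\gamma=p^k$, the telescoping of the factorial ratio, and the identity $\binom{k}{2}+k=\binom{k+1}{2}$), all of which check out.
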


\begin{proof}
The prove is done by induction with respect to $k$. %Let $n\geq 1$, for $k=1$, it is the previous proposition. %Assume that for a fixed $k< n$, we have 
%\[\Dpq ^k(x\ominus a)_{p,q}^{n}=p^{\binom{k}{2}}\frac{[n]_{p,q}!}{[n-k]_{p,q}!}(p^kx\ominus a)_{p,q}^{n-k}.\]
%Then, we have
%\begin{eqnarray*}
%\Dpq ^{k+1}(x\ominus a)_{p,q}^{n}&=&\Dpq\left(\Dpq ^{k}(x\ominus a)_{p,q}^{n}\right)\\
%&=&p^{\binom{k}{2}}\frac{[n]_{p,q}!}{[n-k]_{p,q}!}\Dpq(p^kx\ominus a)_{p,q}^{n-k}\\
%&=& p^{\binom{k}{2}}\frac{[n]_{p,q}!}{[n-k]_{p,q}!} p^k[n-k]_{p,q}(p^kx\ominus a)_{p,q}^{n-k-1}\\
%&=&p^{\binom{k+1}{2}}\frac{[n]_{p,q}!}{[n-k-1]_{p,q}!}(p^kx\ominus a)_{p,q}^{n-k-1}.
%\end{eqnarray*}
\end{proof}

\begin{remark}
For the classical derivative, it is known that for any complex number $\alpha$, one has
\[\dfrac{d}{dx}x^{\alpha}=\alpha x^{\alpha-1}.\]
In what follows, we would like to state similar result for the $\Dpq$ derivative as done for the $D_q$ derivative in \cite{kac}. %We then follow the same procedure.
\end{remark}

\begin{proposition}\label{proexpand}
Let $m$ and $n$ be two non negative integers. Then the following assertion is valid.
\begin{equation}\label{expand1}
(x\ominus a)_{p,q}^{m+n}=(x\ominus a)_{p,q}^{m}(p^mx\ominus q^ma)_{p,q}^{n}.
\end{equation}
\end{proposition}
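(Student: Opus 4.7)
The plan is to prove the identity by direct manipulation of the defining product. Recall that by definition
\[
(x\ominus a)_{p,q}^{n}=\prod_{k=0}^{n-1}(p^{k}x-aq^{k}),
\]
so the strategy is simply to split the product defining the left-hand side at the index $k=m$ and reindex the tail.

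First I would write out
\[
(x\ominus a)_{p,q}^{m+n}=\prod_{k=0}^{m+n-1}(p^{k}x-aq^{k})=\Bigl(\prod_{k=0}^{m-1}(p^{k}x-aq^{k})\Bigr)\Bigl(\prod_{k=m}^{m+n-1}(p^{k}x-aq^{k})\Bigr).
\]
The first factor is by definition $(x\ominus a)_{p,q}^{m}$. For the second factor, I would perform the change of index $j=k-m$, getting
\[
\prod_{k=m}^{m+n-1}(p^{k}x-aq^{k})=\prod_{j=0}^{n-1}(p^{m+j}x-aq^{m+j})=\prod_{j=0}^{n-1}\bigl(p^{j}(p^{m}x)-(q^{m}a)q^{j}\bigr),
\]
which is exactly $(p^{m}x\ominus q^{m}a)_{p,q}^{n}$ by the definition of the $(p,q)$-power with base $p^{m}x$ and parameter $q^{m}a$. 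Combining the two pieces yields the claimed identity.

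There is no real obstacle here: the only thing to be careful about is that the definition of the $(p,q)$-power involves the factors $p^{k}x-aq^{k}$, so that substituting $x\mapsto p^{m}x$ and $a\mapsto q^{m}a$ produces precisely the factors $p^{m+j}x-aq^{m+j}$ needed to match the tail of the product. Once the indexing is set up correctly the identity is an immediate consequence of the definition.
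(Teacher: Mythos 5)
Your proof is correct and is exactly the intended argument: the paper itself omits the proof of this proposition, and the direct splitting of the product $\prod_{k=0}^{m+n-1}(p^{k}x-aq^{k})$ at $k=m$ followed by the reindexing $j=k-m$ is the natural (and essentially only) way to verify it. You also implicitly read the paper's definition $(x\ominus a)_{p,q}^n=(x-a)(px-aq)\cdots(p^{n-1}x-aq^{n-1})$ correctly, fixing the obvious typo $px^{n-1}$ in the source.
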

%\begin{proof}
%By definition, 
%\begin{eqnarray*}
%(x\ominus a)_{p,q}^{m+n}&=&\prod_{k=0}^{n+m-1}(xp^k-aq^k)\\
%&=&\prod_{k=0}^{m-1}(xp^k-aq^k)\prod_{k=m}^{n+m-1}(xp^k-aq^k)\\
%&=&\prod_{k=0}^{m-1}(xp^k-aq^k)\prod_{k=0}^{n-1}((xp^m)p^k-(aq^m)q^k)\\
%&=&(x\ominus a)_{p,q}^{m}(p^mx\ominus q^ma)_{p,q}^{n}.
%\end{eqnarray*}
%Which is the expected result.
%\end{proof}

\noindent In Proposition \ref{proexpand}, if we take $m=-n$, then we get the following extension of the $(p,q)$-power basis.
\begin{definition} Let $n$ be a non negative integer, then we set the following definition.
\begin{equation}\label{negdef}
(x\ominus a)_{p,q}^{-n}=\frac{1}{(p^{-n}x\ominus q^{-n}a)_{p,q}^{n}}.
\end{equation}
\end{definition}

\begin{proposition}
For any two integers $m$ and $n$, (\ref{expand1}) holds.
\end{proposition}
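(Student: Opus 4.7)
The plan is to reduce the identity to an explicit manipulation of products of the factors $A_j := p^j x - q^j a$, indexed by $j\in\Z$. For non-negative $n$ one has $(x\ominus a)_{p,q}^n=\prod_{j=0}^{n-1}A_j$, and a direct evaluation of the definition (\ref{negdef}) gives
\[
(x\ominus a)_{p,q}^{-n}=\prod_{k=-n}^{-1}A_k^{-1}.
\]
Hence for every integer $n$ the $(p,q)$-power is a product of $A_k$'s with exponent $\pm 1$ over a well-defined integer interval. Moreover $(p^m x\ominus q^m a)_{p,q}^n$ is the same product with $A_k$ replaced by $A_{k+m}$, so by reindexing it too can be written as a product of the $A_k$'s with the appropriate sign of exponent. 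The right-hand side of (\ref{expand1}) thus becomes a product of $A_k^{\pm 1}$ over the union of two integer intervals.

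With this uniform description in hand, I would split the proof into four cases according to the signs of $m$ and $n$. The case $m\geq 0,\ n\geq 0$ is exactly the preceding proposition. In the two mixed cases $m\geq 0,\ n<0$ and $m<0,\ n\geq 0$, one factor appears with exponent $+1$ and the other with exponent $-1$; the two intervals overlap, and after cancelling common factors the residual product equals $(x\ominus a)_{p,q}^{m+n}$. Each mixed case needs a brief subcase analysis on the sign of $m+n$ to pin down which side of the overlap survives. In the final case $m<0,\ n<0$, both factors carry exponent $-1$, and the index intervals $[m,-1]$ and $[m+n,m-1]$ are disjoint and abut at $m$; they concatenate to $[m+n,-1]$, yielding $\prod_{k=m+n}^{-1}A_k^{-1}=(x\ominus a)_{p,q}^{m+n}$.

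The main obstacle is organizational rather than mathematical: one must track the four sign combinations (with one further subcase in each mixed case) and verify the shift $k=j+m$ correctly in the factor $(p^m x\ominus q^m a)_{p,q}^n$. An alternative would be a two-variable induction using the elementary step identity $(x\ominus a)_{p,q}^{n+1}=(x\ominus a)_{p,q}^{n}(p^n x-q^n a)$, but the product-based argument above is cleaner since it treats positive and negative exponents on an equal footing.
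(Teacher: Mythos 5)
Your proposal is correct and follows essentially the same route as the paper: a four-way case split on the signs of $m$ and $n$, with a further subcase on the sign of $m+n$ in the two mixed cases. Your uniform description of $(x\ominus a)_{p,q}^{n}$ as $\prod A_k^{\pm 1}$ over integer intervals is just a more transparent bookkeeping of the same cancellations the paper carries out by repeatedly invoking (\ref{negdef}) and the already-established nonnegative case of (\ref{expand1}).
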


\begin{proof}
The case $m>0$ and $n>0$ has already been proved, and the case where one of $m$ and $n$ is zero is easy.  Let us first consider the case\linebreak $m=-m'<0$ and $n>0$. Then,
\begin{eqnarray*}
(x\ominus a)_{p,q}^{m}(p^mx\ominus q^ma)_{p,q}^{n}&=&(x\ominus a)_{p,q}^{-m'}(p^{-m'}x\ominus q^{-m'}a)_{p,q}^{n}\\
\textrm{{\small by (\ref{negdef})}}\quad&=&\frac{(p^{-m'}x\ominus q^{-m'}a)_{p,q}^{n}}{(p^{-m'}x\ominus q^{-m'}a)_{p,q}^{m'}}\\
\textrm{{\small by (\ref{expand1})}}\quad&=&\left\{\begin{array}{ll}
\left(p^m(p^{-m}x)\ominus q^m(q^{-m}a)\right)_{p,q}^{n-m'}& \textrm{if }\;\; n\geq m'\\
\frac{1}{\left(q^n(q^{-m'}x)\ominus q^n(q^{-m'}a)\right)_{p,q}^{m'-n}}& \textrm{if }\;\;n<m'
\end{array}\right.\\
\textrm{{\small by (\ref{negdef})}}\quad&=&(x\ominus a)_{p,q}^{n-m'}=(x\ominus a)^{n+m}_{p,q}.
\end{eqnarray*}
If $m\geq 0$ and $n=-n'<0$, then
\begin{eqnarray*}
(x\ominus a)_{p,q}^{m}(p^mx\ominus q^ma)_{p,q}^{n}&=&(x\ominus a)_{p,q}^{m}(p^{m}x\ominus q^{m}a)_{p,q}^{-n'}\\
&=&\frac{(x\ominus a)_{p,q}^{m}}{(p^{m-n'}x\ominus q^{m-n'}a)_{p,q}^{n'}}\\
&=&\left\{ \begin{array}{ll}
\frac{(x\ominus a)_{p,q}^{m-n'}(p^{m-n'}x\ominus aq^{m-n'})_{p,q}^{n'}}{(p^{m-n'}x\ominus q^{m-n'}a)_{p,q}^{n'}}&\textrm{if}\;\; m>n'\\
\frac{(x\ominus a)_{p,q}^{m}}{(p^{m-n'}x\ominus q^{m-n'}a)_{p,q}^{n'-m}(p^{n'-m}(p^{m-n'})x\ominus q^{n'-m} (q^{m-n'}a))_{p,q}^{m}}& \textrm{if}\;\; m<n'
\end{array}\right.\\
&=&\left\{ \begin{array}{ll}
(x\ominus a)_{p,q}^{m-n'}&\textrm{if}\;\; m>n'\\
\frac{1}{(p^{m-n'}x\ominus q^{m-n'}a)_{p,q}^{n'-m}}& \textrm{if}\;\; m<n'
\end{array}\right.\\
&=&(x\ominus a)_{p,q}^{m-n'}=(x\ominus a)_{p,q}^{m+n}.
\end{eqnarray*}
Lastly, if $m=-m'<0$ and $n=-n'<0$, 
\begin{eqnarray*}
(x\ominus a)_{p,q}^{m}(p^mx\ominus q^ma)_{p,q}^{n}&=&(x\ominus a)_{p,q}^{-m'}(p^{-m'}x\ominus q^{-m'}a)_{p,q}^{-n'}\\
&=&\frac{1}{(p^{-m'}x\ominus q^{-m'}a)_{p,q}^{m'}(p^{-n'-m'}x\ominus q^{-n'-m'}a)_{p,q}^{n'}}\\
&=& \frac{1}{(p^{-n'-m'}x\ominus q^{-n'-m'}a)_{p,q}^{n'+m'}}\\
&=&(x\ominus a)^{-m'-n'}_{p,q}=(x\ominus a)_{p,q}^{m+n}.
\end{eqnarray*}
Therefore, (\ref{expand1}) is true for any integers $m$ and $n$.
\end{proof}

\noindent It is natural to ask ourselves if (\ref{derule1}) is valid for any integer as well. But before trying to answer this question, let us generalise the twin-basic number as follows.

\begin{definition}
Let $\alpha$ be any number,
\begin{equation}
[\alpha]_{p,q}=\dfrac{p^\alpha-q^\alpha}{p-q}.
\end{equation}
\end{definition}

\begin{proposition}
For any integer $n$, 
\begin{equation}\label{der3}
\Dpq (x\ominus a)^n_{p,q}=[n]_{p,q}(px\ominus a)^{n-1}_{p,q}.
\end{equation}
\end{proposition}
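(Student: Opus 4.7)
The statement extends (\ref{derule1}) to arbitrary integer exponents. Since (\ref{derule1}) already covers $n\ge 1$, and $n=0$ is immediate (both sides vanish, as $[0]_{p,q}=0$), my plan is to focus on $n=-m$ with $m\ge 1$. The idea is to exploit the fact that the definition (\ref{negdef}), together with the now-established identity (\ref{expand1}), makes $(x\ominus a)_{p,q}^{-m}$ and $(p^{-m}x\ominus q^{-m}a)_{p,q}^{m}$ reciprocals of each other.

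Concretely, I would apply the product rule (\ref{productrule2}) to the relation
$$(x\ominus a)_{p,q}^{-m}\,(p^{-m}x\ominus q^{-m}a)_{p,q}^{m}=1,$$
taking $f(x)=(x\ominus a)_{p,q}^{-m}$ and $g(x)=(p^{-m}x\ominus q^{-m}a)_{p,q}^{m}$. Evaluating $\Dpq g$ by (\ref{derule2}) with $\gamma=p^{-m}$ and $a$ replaced by $q^{-m}a$, and using (\ref{negdef}) to recognise $f(px)=1/(p^{1-m}x\ominus q^{-m}a)_{p,q}^{m}$, one can peel off the last linear factor via (\ref{expand1}) (which gives $(p^{1-m}x\ominus q^{-m}a)_{p,q}^{m}=(p^{1-m}x\ominus q^{-m}a)_{p,q}^{m-1}(x-q^{-1}a)$) and then solve for $\Dpq f(x)$ to obtain
$$\Dpq (x\ominus a)_{p,q}^{-m} \;=\; -\frac{p^{-m}q\,[m]_{p,q}}{(qx-a)\,(p^{-m}qx\ominus q^{-m}a)_{p,q}^{m}}.$$

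It then remains to identify this expression with $[-m]_{p,q}(px\ominus a)_{p,q}^{-m-1}$, which by (\ref{negdef}) equals $[-m]_{p,q}/(p^{-m}x\ominus q^{-m-1}a)_{p,q}^{m+1}$. This reduces to two elementary checks: the scalar identity $-p^{-m}q\,[m]_{p,q}=q^{m+1}[-m]_{p,q}$, immediate from the definition of the twin-basic number; and the product identity $q^{m+1}(p^{-m}x\ominus q^{-m-1}a)_{p,q}^{m+1}=(qx-a)(p^{-m}qx\ominus q^{-m}a)_{p,q}^{m}$, which follows by distributing one factor of $q$ into each of the $m+1$ linear terms on the left and reindexing. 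The only real obstacle is bookkeeping the many shifts by $p$ and $q$; once the correct expansions via (\ref{expand1}) are chosen, each manipulation is routine. An essentially equivalent route is to apply the quotient rule (\ref{quotient1}) directly to $1/(p^{-m}x\ominus q^{-m}a)_{p,q}^{m}$; the same cancellations and coefficient check appear.
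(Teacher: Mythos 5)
Your proposal is correct and follows essentially the same route as the paper, which for $n<0$ simply invokes the quotient rule (\ref{quotient1}) together with the definition (\ref{negdef}); your product-rule argument applied to $(x\ominus a)_{p,q}^{-m}(p^{-m}x\ominus q^{-m}a)_{p,q}^{m}=1$ is the same computation in disguise, as you yourself note. The intermediate expression, the scalar identity $-p^{-m}q[m]_{p,q}=q^{m+1}[-m]_{p,q}$, and the factor-peeling step all check out, so your write-up in fact supplies the details the paper omits.
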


\begin{proof}
Note that $[0]=0$. The result is already proved for $n\geq 0$. For  $n=-n'<0$, we use (\ref{quotient1}) and (\ref{negdef}) to get the result.% we have
%\begin{eqnarray*}
%  \Dpq (x\ominus a)^n_{p,q}&=&\Dpq\left(\frac{1}{(p^{-n'}x\ominus q^{-n'}a)^{n'}_{p,q}}\right)\\
%  &=&-\frac{\Dpq (p^{-n'}x\ominus q^{-n'}a)^{n'}_{p,q}}{(q^{-n'}(px)\ominus q^{-n'}a)^{n'}_{p,q}(qp^{-n'}x\ominus q^{-n'}a)^{n'}_{p,q}}\\
%  &=&-\frac{p^{-n'}[n']_{p,q} (p^{-n'}x\ominus q^{-n'}a)^{n'-1}_{p,q}}{(q^{-n'}(px)\ominus q^{-n'}a)^{n'}_{p,q}(qp^{-n'}x\ominus q^{-n'}a)^{n'}_{p,q}}\\
%  &=&-\frac{p^{-n'}[n']_{p,q}}{(x-q^{-1}a)(qp^{-n'}x\ominus q^{-n'}a)^{n'}_{p,q}}\\
%  &=&\frac{-p^{-n'}q^{-n'}[n']_{p,q}}{(p^{-n'-1}(px)\ominus q^{-n'-1}a)^{n'+1}_{p,q}}\\
%  &=&-p^{-n'}q^{-n'}[n']_{p,q}(px\ominus a)_{p,q}^{-n'-1}\\
%  &=&[n]_{p,q}(px\ominus a)_{p,q}^{n-1}.
%\end{eqnarray*}
%As was announced. 
\end{proof}

%\begin{remark}
%It to be noted that $(a\ominus x)^n_{p,q}\neq (-1)^n(x\ominus a)^n_{p,q}$. Instead, for $n\geq 1$, 
%\begin{eqnarray*}
%(a\ominus x)_{p,q}^n&=& (a-x)(pa-xq)\cdots (p^{n-1}a-xq^{n-1})\\
%&=& (-1)^n(pq)^{\binom{n}{2}}(x-a)(p^{-1}x-aq^{-1})\cdots (p^{-n+1}x-aq^{-n+1})\\
%&=& (-1)^n(pq)^{\binom{n}{2}}\left(p^{-n+1}x\ominus aq^{-n+1}  \right)_{p,q}^n
%\end{eqnarray*}
%\end{remark}

\begin{proposition}
The following relations are valid:
\begin{eqnarray}
   \Dpq \dfrac{1}{(x\ominus a)_{p,q}^n}&=&\dfrac{-q[n]_{p,q}}{(qx\ominus a)_{p,q}^{n+1}},\label{r1}\\
   \Dpq (a\ominus x)_{p,q}^{n}&=& -[n]_{p,q}(a\ominus qx)_{p,q}^{n-1},\label{r2}\\
   \Dpq \dfrac{1}{(a\ominus x)_{p,q}}&=&  \dfrac{p[n]_{p,q}}{(a\ominus px)_{p,q}^{n+1}}.  \label{r3} 
\end{eqnarray}
\end{proposition}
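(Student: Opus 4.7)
The plan is to reduce each identity to an application of either the $(p,q)$-quotient rule (\ref{quotient1}) or the definition of $\Dpq$, and then to simplify using telescoping factorizations of the $(p,q)$-powers that follow from (\ref{expand1}).

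For (\ref{r1}), I would apply (\ref{quotient1}) with $f(x)=1$ and $g(x)=(x\ominus a)_{p,q}^n$. Since $\Dpq 1=0$ and $\Dpq g=[n]_{p,q}(px\ominus a)_{p,q}^{n-1}$ by (\ref{derule1}), this gives
\[
\Dpq \frac{1}{(x\ominus a)_{p,q}^n}=\frac{-[n]_{p,q}(px\ominus a)_{p,q}^{n-1}}{(px\ominus a)_{p,q}^{n}(qx\ominus a)_{p,q}^{n}}.
\]
Writing $(px\ominus a)_{p,q}^{n}=(px\ominus a)_{p,q}^{n-1}(p^{n}x-aq^{n-1})$ cancels a factor and leaves a bare linear term $(p^{n}x-aq^{n-1})$ in the denominator. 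Multiplying numerator and denominator by $q$ converts this term into $(p^{n}qx-aq^{n})$, which is precisely the extra factor needed to upgrade $(qx\ominus a)_{p,q}^n$ to $(qx\ominus a)_{p,q}^{n+1}$, and the claimed formula pops out.

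For (\ref{r2}), I would compute directly from the definition of $\Dpq$. The two key factorizations are
\[
(a\ominus px)_{p,q}^{n}=p^{n-1}(a-px)(a\ominus qx)_{p,q}^{n-1},\qquad (a\ominus qx)_{p,q}^{n}=(p^{n-1}a-q^{n}x)(a\ominus qx)_{p,q}^{n-1},
\]
both verified by rewriting each factor $p^{k}a-q^{k}(\text{shifted }x)$ and pulling out a $p$ from every factor with index $k\geq 1$ in the first identity. Subtracting the second from the first, the common factor $(a\ominus qx)_{p,q}^{n-1}$ comes out and the remaining bracket collapses to $p^{n-1}(a-px)-(p^{n-1}a-q^{n}x)=(q^{n}-p^{n})x$. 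Dividing by $(p-q)x$ yields exactly $-[n]_{p,q}(a\ominus qx)_{p,q}^{n-1}$.

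For (\ref{r3}) (read with the implicit exponent $n$ on the left-hand side), I would repeat the argument of (\ref{r1}): apply (\ref{quotient1}) with $f=1$ and $g=(a\ominus x)_{p,q}^n$, substitute the value of $\Dpq g$ obtained in (\ref{r2}), cancel $(a\ominus qx)_{p,q}^{n-1}$ via the factorization $(a\ominus qx)_{p,q}^{n}=(p^{n-1}a-q^{n}x)(a\ominus qx)_{p,q}^{n-1}$, and multiply top and bottom by $p$ so that the stray factor $(p^{n}a-pq^{n}x)$ is absorbed into $(a\ominus px)_{p,q}^{n+1}=(a\ominus px)_{p,q}^{n}(p^{n}a-pq^{n}x)$. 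The main difficulty throughout is purely bookkeeping: keeping track of the exact shifts of $p$ and $q$ inside the $(p,q)$-powers and identifying the correct linear factor that bridges a power of exponent $n$ with the adjacent one of exponent $n\pm 1$.
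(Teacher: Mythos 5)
Your proposal is correct: all three factorization identities you use (e.g. $(a\ominus px)_{p,q}^{n}=p^{n-1}(a-px)(a\ominus qx)_{p,q}^{n-1}$ and $(qx\ominus a)_{p,q}^{n+1}=(qx\ominus a)_{p,q}^{n}(p^{n}qx-q^{n}a)$) check out against the definition of the $(p,q)$-power, and the quotient-rule reductions for (\ref{r1}) and (\ref{r3}) together with the direct difference quotient for (\ref{r2}) yield exactly the stated formulas, including the correct reading of the typo in (\ref{r3}). The paper itself only says the proof ``follows by direct computations,'' so your argument is precisely the intended one, just written out in full.
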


\begin{proof} The proof follows by direct computations.
\end{proof}

\begin{proposition}
Let $n\geq 1$ be an integer, and $0\leq k\leq n$, we have the following
\begin{equation}\label{derule4}
\Dpq ^k(a\ominus x)_{p,q}^{n}=(-1)^kq^{\binom{k}{2}}\frac{[n]_{p,q}!}{[n-k]_{p,q}!}(a\ominus q^kx)_{p,q}^{n-k}.
\end{equation} 
\end{proposition}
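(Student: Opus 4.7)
The plan is to induct on $k$, mirroring the argument used for (\ref{derule3}).

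The base case $k=0$ is immediate: both sides equal $(a\ominus x)_{p,q}^n$, since $q^{\binom{0}{2}}=1$ and $[n]_{p,q}!/[n]_{p,q}!=1$. For the inductive step, assume the formula for $k$ and apply $\Dpq$ once more. By linearity,
\[
\Dpq^{k+1}(a\ominus x)_{p,q}^n
=(-1)^k q^{\binom{k}{2}}\frac{[n]_{p,q}!}{[n-k]_{p,q}!}\,\Dpq (a\ominus q^k x)_{p,q}^{n-k}.
\]
So the whole induction reduces to a scaled variant of (\ref{r2}), namely the auxiliary identity
\[
\Dpq (a\ominus \gamma x)_{p,q}^{m}=-\gamma\,[m]_{p,q}(a\ominus q\gamma x)_{p,q}^{m-1},
\]
applied with $\gamma=q^k$ and $m=n-k$. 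Plugging this in and using $\binom{k}{2}+k=\binom{k+1}{2}$ and $[n-k]_{p,q}!/[n-k-1]_{p,q}!=[n-k]_{p,q}$ produces the desired formula with $k$ replaced by $k+1$.

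The main (and essentially only) obstacle is proving this scaled differentiation rule. I would prove it directly from the definition of $\Dpq$, following the same strategy that upgrades (\ref{derule1}) to (\ref{derule2}). Writing the defining product out,
\[
(a\ominus \gamma x)_{p,q}^{m}=\prod_{j=0}^{m-1}\bigl(p^j a-\gamma q^j x\bigr),
\]
one expresses $(a\ominus \gamma (px))_{p,q}^{m}$ and $(a\ominus \gamma (qx))_{p,q}^{m}$ in the form where the leading factor of the first and the trailing factor of the second can be split off, leaving a common middle product equal (up to shift of arguments) to $(a\ominus q\gamma x)_{p,q}^{m-1}$. The difference of the two split-off linear factors is
\[
(a-\gamma p x)-(p^{m-1}a-\gamma q^{m-1}\cdot q x)=-\gamma x(p^m-q^m)/1\text{-like combination},
\]
which after dividing by $(p-q)x$ contributes exactly $-\gamma[m]_{p,q}$; this is the point where the minus sign and the $\gamma$ factor arise simultaneously, explaining the $-q^k$ coefficient one obtains at level $k$.

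Once the auxiliary scaling identity is in hand the induction closes cleanly, and no further estimates or case analyses are needed. I expect the only calculational subtlety to be keeping track of indices in the telescoping of the product above; everything else is bookkeeping with $(p,q)$-factorials and binomial exponents.
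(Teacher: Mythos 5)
Your proof is correct and follows the same route as the paper, which simply states that the result is proved by induction on $k$. The auxiliary scaled rule $\Dpq (a\ominus \gamma x)_{p,q}^{m}=-\gamma[m]_{p,q}(a\ominus q\gamma x)_{p,q}^{m-1}$ is exactly the right lemma to close the induction (it follows from (\ref{r2}) by the substitution $x\mapsto\gamma x$ in the difference quotient), and your bookkeeping with $\binom{k}{2}+k=\binom{k+1}{2}$ and the $(p,q)$-factorials is accurate.
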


\begin{proof}
The prove is done by induction with respect to $k$. %Let $n\geq 1$, for $k=1$, it is the relation (\ref{r2}). Assume that for a fixed $k< n$, we have 
%\[\Dpq ^k(a\ominus x)_{p,q}^{n}=(-1)^kq^{\binom{k}{2}}\frac{[n]_{p,q}!}{[n-k]_{p,q}!}(a\ominus q^kx)_{p,q}^{n-k}.\]
%Then, we have
%\begin{eqnarray*}
%\Dpq ^{k+1}(a\ominus x)_{p,q}^{n}&=&\Dpq\left(\Dpq ^{k}(a\ominus x)_{p,q}^{n}\right)\\
%&=&(-1)^kq^{\binom{k}{2}}\frac{[n]_{p,q}!}{[n-k]_{p,q}!}\Dpq(a\ominus q^kx)_{p,q}^{n-k}\\
%&=&(-1)^k q^{\binom{k}{2}}\frac{[n]_{p,q}!}{[n-k]_{p,q}!}\times - q^k[n-k]_{p,q}(a\ominus q^{k+1})_{p,q}^{n-k-1}\\
%&=&(-1)^{k+1}q^{\binom{k+1}{2}}\frac{[n]_{p,q}!}{[n-k-1]_{p,q}!}(a\ominus q^{k+1}x)_{p,q}^{n-k-1}.
%\end{eqnarray*}
\end{proof}

\subsection{$(p,q)$-Taylor formulas for polynomials}
In this section, two Taylors formulas for polynomials are given and some of their consequences are investigated.
\begin{theorem}
For any polynomial $f(x)$ of degree $N$, and any number $a$, we have the following $(p,q)$-Taylor expansion:
\begin{equation}\label{pqtaylor}
f(x)=\sum_{k=0}^{N}p^{-\binom{k}{2}}\frac{\left(\Dpq^k f\right)(ap^{-k})}{[k]_{p,q}!}(x\ominus a)_{p,q}^k.
\end{equation}
\end{theorem}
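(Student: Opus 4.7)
The plan is to exploit the fact that $\{(x\ominus a)_{p,q}^k\}_{k=0}^{N}$ is a basis of the space of polynomials of degree at most $N$. Indeed, from the definition $(x\ominus a)_{p,q}^k=(x-a)(px-aq)\cdots(p^{k-1}x-aq^{k-1})$, the polynomial $(x\ominus a)_{p,q}^k$ has degree exactly $k$ (with leading coefficient $p^{\binom{k}{2}}$), so these $N+1$ polynomials are linearly independent and span all polynomials of degree $\leq N$. Hence one may write uniquely
\begin{equation*}
f(x)=\sum_{k=0}^{N}c_k\,(x\ominus a)_{p,q}^k,
\end{equation*}
and the only task is to identify the scalars $c_k$.

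To extract $c_j$, I would apply $\Dpq^{j}$ to both sides and use the already established formula (\ref{derule3}), which gives
\begin{equation*}
\Dpq^{j}(x\ominus a)_{p,q}^{k}=p^{\binom{j}{2}}\,\frac{[k]_{p,q}!}{[k-j]_{p,q}!}\,(p^{j}x\ominus a)_{p,q}^{k-j}\quad(j\leq k),
\end{equation*}
and is $0$ for $j>k$ (by linearity and $\Dpq(x\ominus a)_{p,q}^0=0$). Thus
\begin{equation*}
(\Dpq^{j}f)(x)=\sum_{k=j}^{N}c_k\,p^{\binom{j}{2}}\,\frac{[k]_{p,q}!}{[k-j]_{p,q}!}\,(p^{j}x\ominus a)_{p,q}^{k-j}.
\end{equation*}

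The key idea is now the choice of evaluation point. Setting $x=a p^{-j}$ turns each factor $(p^{j}x\ominus a)_{p,q}^{k-j}$ into $(a\ominus a)_{p,q}^{k-j}$, whose very first factor is $a-a=0$ as soon as $k-j\geq 1$. Hence only the term $k=j$ survives (with $(a\ominus a)_{p,q}^{0}=1$), leaving
\begin{equation*}
(\Dpq^{j}f)(ap^{-j})=c_{j}\,p^{\binom{j}{2}}\,[j]_{p,q}!,
\end{equation*}
and solving for $c_j$ yields exactly the coefficient appearing in (\ref{pqtaylor}).

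The main conceptual point — and the only place that requires care — is the choice $x=ap^{-j}$: it is what forces $p^{j}x$ to coincide with $a$ so that the telescoping $(a\ominus a)_{p,q}^{k-j}$ annihilates all unwanted terms. Everything else is linearity of $\Dpq$, application of the previously proved derivative rule (\ref{derule3}), and the basis argument. No additional structural hurdle arises, so the proof amounts to a single application of (\ref{derule3}) followed by evaluation at the cleverly chosen point.
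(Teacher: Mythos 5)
Your proposal is correct and follows essentially the same route as the paper: expand $f$ in the basis $\{(x\ominus a)_{p,q}^k\}$, apply $\Dpq^{j}$ using (\ref{derule3}), and evaluate at $x=ap^{-j}$ so that the factor $(a\ominus a)_{p,q}^{k-j}$ annihilates all terms with $k>j$. The only difference is that you spell out the basis argument and the vanishing of the higher terms more explicitly than the paper does.
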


\begin{proof}
Let $f$ be a polynomial of degree $N$, then we have the expansion 
\begin{equation}\label{preuve1}
f(x)=\sum_{j=0}^Nc_j(x\ominus a)_{p,q}^j.
\end{equation}
Let $k$ be an integer such that $0\leq k\leq N$, then, applying $\Dpq^k$ on both sides of  (\ref{preuve1}) and using (\ref{derule3}), we get
\[\left(\Dpq^k f\right)(x)=\sum_{j=k}^Nc_j\frac{[j]_{p,q}!}{[j-k]_{p,q}!}p^{\binom{k}{2}}(p^kx\ominus q)_{p,q}^{j-k}.\]
Substituting $x=ap^{-k}$, it follows that 
\[\left(\Dpq^k f\right)(ap^{-k})=c_k[k]_{p,q}!p^{\binom{k}{2}},\]
thus we get 
\[c_k=p^{-\binom{k}{2}}\dfrac{\left(\Dpq^k f\right)(ap^{-k})}{[k]_{p,q}!}.\]
This proves the desired result.
\end{proof}

\begin{corollary}
The following connection formula holds.
\begin{equation}
x^n=\sum_{k=0}^np^{-\binom{k}{2}}\pqbinomial{n}{k}{p}{q}(ap^{-k})^{n-k}(x\ominus a)_{p,q}^k \label{conec1}
\end{equation}
\end{corollary}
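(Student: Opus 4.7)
The plan is to specialize the $(p,q)$-Taylor formula (\ref{pqtaylor}) to $f(x)=x^n$ and compute each ingredient explicitly. Since $x^n$ is a polynomial of degree $N=n$, Theorem on $(p,q)$-Taylor expansion applies verbatim with the upper limit $N$ replaced by $n$, so the identity is immediate once I evaluate $(\Dpq^k f)(ap^{-k})$.

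First I would iterate the basic rule $\Dpq x^n=[n]_{p,q}x^{n-1}$ to obtain
\[
\Dpq^k x^n=\frac{[n]_{p,q}!}{[n-k]_{p,q}!}\,x^{n-k},\qquad 0\le k\le n,
\]
by a short induction on $k$ (telescoping the factors $[n]_{p,q}[n-1]_{p,q}\cdots[n-k+1]_{p,q}$). In particular $\Dpq^k x^n$ vanishes for $k>n$, which is consistent with the Taylor sum being finite.

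Next I would substitute $x=ap^{-k}$ to get
\[
\bigl(\Dpq^k x^n\bigr)(ap^{-k})=\frac{[n]_{p,q}!}{[n-k]_{p,q}!}\,(ap^{-k})^{n-k}.
\]
Plugging this into (\ref{pqtaylor}) yields
\[
x^n=\sum_{k=0}^{n}p^{-\binom{k}{2}}\frac{1}{[k]_{p,q}!}\cdot\frac{[n]_{p,q}!}{[n-k]_{p,q}!}(ap^{-k})^{n-k}(x\ominus a)_{p,q}^k,
\]
and recognizing the quotient of $(p,q)$-factorials as the $(p,q)$-binomial coefficient defined in (\ref{pqbin}) gives exactly the claimed identity (\ref{conec1}).

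There is no real obstacle here: the only non-trivial piece is the iterated derivative formula, and that is a one-line induction using linearity of $\Dpq$ and the rule $\Dpq x^m=[m]_{p,q}x^{m-1}$. Everything else is direct substitution into the Taylor formula already established.
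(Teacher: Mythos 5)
Your proof is correct and follows exactly the route the paper intends: the corollary is stated immediately after the $(p,q)$-Taylor theorem (\ref{pqtaylor}) with its proof omitted, and specializing that theorem to $f(x)=x^n$, computing $\Dpq^k x^n=\frac{[n]_{p,q}!}{[n-k]_{p,q}!}x^{n-k}$, and evaluating at $x=ap^{-k}$ is precisely the intended derivation. Your identification of the factorial quotient with the $(p,q)$-binomial coefficient (\ref{pqbin}) completes the argument correctly.
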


%\begin{example}
%Consider $f(x)=x^n$, where $n$ is a positive integer. For $k\leq n$, we have
%\[\left(\Dpq ^k f\right)(x)=\dfrac{[n]_{p,q}!}{[n-k]_{p,q}!}x^{n-k}.\]
%Thus we have
%\begin{eqnarray}
%x^n&=&\sum_{k=0}^np^{-\binom{k}{2}}\frac{[n]_{p,q}!}{[n-k]_{p,q}![k]_{p,q}!}(ap^{-k})^{n-k}(x\ominus a)_{p,q}^j\nonumber\\
%&=&
%\end{eqnarray}
%\end{example}

\begin{theorem}
For any polynomial $f(x)$ of degree $N$, and any number $a$, we have the following $(p,q)$-Taylor expansion:
\begin{equation}\label{pqtaylor2}
f(x)=\sum_{k=0}^{N}(-1)^kq^{-\binom{k}{2}}\frac{\left(\Dpq^k f\right)(aq^{-k})}{[k]_{p,q}!}(a\ominus x)_{p,q}^k.
\end{equation}
\end{theorem}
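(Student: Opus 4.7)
The plan is to mirror, almost step for step, the proof of the first $(p,q)$-Taylor formula \eqref{pqtaylor}, replacing the basis $\{(x\ominus a)_{p,q}^j\}$ by the basis $\{(a\ominus x)_{p,q}^j\}$ and using \eqref{derule4} in place of \eqref{derule3}.

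First, note that $(a\ominus x)_{p,q}^j = (a-x)(pa-qx)\cdots(p^{j-1}a-q^{j-1}x)$ is a polynomial of degree exactly $j$ in $x$ (with leading coefficient $(-1)^j q^{\binom{j}{2}}$), so the family $\{(a\ominus x)_{p,q}^j\}_{j=0}^N$ is a basis of the space of polynomials of degree at most $N$. Therefore we may write
\begin{equation*}
f(x)=\sum_{j=0}^{N}c_j(a\ominus x)_{p,q}^j
\end{equation*}
for uniquely determined scalars $c_j$, and the task is to identify $c_k$ for each $k\in\{0,1,\dots,N\}$.

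Next, I would apply $\Dpq^{k}$ to both sides and invoke \eqref{derule4}. The terms with $j<k$ are killed by the $k$-fold $(p,q)$-derivative, while for $j\ge k$ one obtains
\begin{equation*}
(\Dpq^k f)(x)=\sum_{j=k}^{N}c_j(-1)^kq^{\binom{k}{2}}\frac{[j]_{p,q}!}{[j-k]_{p,q}!}(a\ominus q^kx)_{p,q}^{j-k}.
\end{equation*}
The decisive observation is then to specialise $x=aq^{-k}$: this makes the first factor of $(a\ominus q^kx)_{p,q}^{j-k}=(a-q^kx)(pa-q^{k+1}x)\cdots$ vanish, namely $a-q^k(aq^{-k})=0$, so the entire product is zero whenever $j-k\ge 1$, while $(a\ominus a)_{p,q}^{0}=1$. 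Only the $j=k$ term survives, giving
\begin{equation*}
(\Dpq^k f)(aq^{-k})=c_k(-1)^kq^{\binom{k}{2}}[k]_{p,q}!.
\end{equation*}

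Finally, I would solve for $c_k$, using $(-1)^{-k}=(-1)^k$, to obtain
\begin{equation*}
c_k=(-1)^k q^{-\binom{k}{2}}\frac{(\Dpq^k f)(aq^{-k})}{[k]_{p,q}!},
\end{equation*}
which, inserted back into the expansion, gives exactly \eqref{pqtaylor2}. There is no real obstacle here: the proof is a direct dualisation of the previous Taylor formula, with the choice of evaluation point $x=aq^{-k}$ (rather than $ap^{-k}$) dictated precisely by which factor of the shifted $(p,q)$-power must be forced to vanish in order to truncate the sum.
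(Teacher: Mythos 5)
Your proof is correct and follows essentially the same route as the paper: expand $f$ in the basis $\{(a\ominus x)_{p,q}^j\}$, apply $\Dpq^k$ via \eqref{derule4}, evaluate at $x=aq^{-k}$ to kill all terms with $j>k$, and solve for $c_k$. In fact your intermediate constants $(-1)^kq^{\binom{k}{2}}$ are the ones consistent with \eqref{derule4}, whereas the paper's displayed intermediate steps contain sign/exponent typos that cancel only in the final answer.
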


\begin{proof}
Let $f$ be a polynomial of degree $N$, then we have the expansion 
\begin{equation}\label{preuve10}
f(x)=\sum_{j=0}^Nc_j(a\ominus x)_{p,q}^j.
\end{equation}
Let $k$ be an integer such that $0\leq k\leq N$, then, applying $\Dpq^k$ on both sides of  (\ref{preuve10}) and using (\ref{derule4}), we get
\[\left(\Dpq^k f\right)(x)=\sum_{j=k}^Nc_j(-1)^j\frac{[j]_{p,q}!}{[j-k]_{p,q}!}q^{-\binom{k}{2}}(a\ominus q^kx)_{p,q}^{j-k}.\]
Substituting $x=aq^{-k}$, it follows that 
\[\left(\Dpq^k f\right)(aq^{-k})=c_k(-1)^k[k]_{p,q}!q^{-\binom{k}{2}},\]
thus we get 
\[c_k=(-1)^kq^{-\binom{k}{2}}\dfrac{\left(\Dpq^k f\right)(aq^{-k})}{[k]_{p,q}!}.\]
This proves the desired result.
\end{proof}

\begin{corollary}
The following connection formula holds.
\begin{equation}
x^n=\sum_{k=0}^n(-1)^kq^{-\binom{k}{2}}\pqbinomial{n}{k}{p}{q}(aq^{-k})^{n-k}(a\ominus x)_{p,q}^k. \label{conec2}
\end{equation}
\end{corollary}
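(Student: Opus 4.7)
The plan is to apply the second $(p,q)$-Taylor formula (\ref{pqtaylor2}) directly to the monomial $f(x)=x^n$, which is a polynomial of degree $N=n$, and identify the resulting coefficients with those in the claimed connection formula.

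First, I would compute the iterated $(p,q)$-derivatives of the monomial. Since $\Dpq x^m=[m]_{p,q}x^{m-1}$, a straightforward induction on $k$ (or iterated application of this rule) yields
\[
\Dpq^k x^n=\frac{[n]_{p,q}!}{[n-k]_{p,q}!}\,x^{n-k}\quad\text{for }0\leq k\leq n,
\]
and $\Dpq^k x^n=0$ for $k>n$. Evaluating at $x=aq^{-k}$ gives
\[
(\Dpq^k f)(aq^{-k})=\frac{[n]_{p,q}!}{[n-k]_{p,q}!}(aq^{-k})^{n-k}.
\]

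Next, I would substitute this expression into (\ref{pqtaylor2}) applied to $f(x)=x^n$; since $f$ has degree $n$, the outer sum truncates at $k=n$. The factor $\frac{1}{[k]_{p,q}!}\cdot\frac{[n]_{p,q}!}{[n-k]_{p,q}!}$ then coincides with the $(p,q)$-binomial coefficient $\pqbinomial{n}{k}{p}{q}$ by its definition (\ref{pqbin}), producing exactly the right-hand side of (\ref{conec2}).

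There is no genuine obstacle here: the only minor step is verifying the formula $\Dpq^k x^n=\frac{[n]_{p,q}!}{[n-k]_{p,q}!}x^{n-k}$, which follows from the basic rule $\Dpq x^m=[m]_{p,q}x^{m-1}$ together with a telescoping of the falling $(p,q)$-factorial; once this is in hand the corollary is a one-line substitution into Theorem (\ref{pqtaylor2}).
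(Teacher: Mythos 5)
Your proposal is correct and is exactly the intended derivation: the paper states this corollary immediately after Theorem (\ref{pqtaylor2}) with no written proof, precisely because it follows by specializing that Taylor formula to $f(x)=x^n$ and using $\Dpq^k x^n=\frac{[n]_{p,q}!}{[n-k]_{p,q}!}x^{n-k}$ together with the definition (\ref{pqbin}) of the $(p,q)$-binomial coefficient. Nothing is missing.
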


%\begin{example}
%Consider $f(x)=x^n$, where $n$ is a positive integer. For $k\leq n$, we have
%\[\left(\Dpq ^k f\right)(x)=\dfrac{[n]_{p,q}!}{[n-k]_{p,q}!}x^{n-k}.\]
%Thus we have
%\begin{eqnarray}
%x^n&=&\sum_{k=0}^n(-1)^kq^{-\binom{k}{2}}\frac{[n]_{p,q}!}{[n-k]_{p,q}![k]_{p,q}!}(aq^{-k})^{n-k}(a\ominus x)_{p,q}^k\nonumber\\
%&=&\sum_{k=0}^n(-1)^kq^{-\binom{k}{2}}\pqbinomial{n}{k}{p}{q}(aq^{-k})^{n-k}(a\ominus x)_{p,q}^k. \label{conec1}
%\end{eqnarray}
%\end{example}
%
%\begin{proposition}
%The following connection between the $(p,q)$-power basis is valid.
%\begin{eqnarray}
%(x\ominus y)_{p,q}^n&=& \sum_{k=0}^{n}...(y\ominus x)_{p,q}^k.
%\end{eqnarray}
%\end{proposition}

%\begin{theorem}
%If $f(x)$ is a formal power series, then $f$ has the following $(p,q)$-Taylor expansion
%\begin{equation}
%f(x)=\sum_{n=0}^{\infty}\dfrac{(\Dpq^n f)(0)}{[n]_{p,q}!}x^n.
%\end{equation}
%\end{theorem}
%
%\begin{proof}
%Obvious.
%\end{proof}

\begin{corollary}
The following connection formulas hold.
\begin{eqnarray}
(x\ominus b)_{p,q}^{n}&=&\sum_{k=0}^{n}\pqbinomial{n}{k}{p}{q}(a\ominus b)_{p,q}^{n-k}(x\ominus a)_{p,q}^{k},\label{conecc3}\\
(b\ominus x)_{p,q}^{n}&=&\sum_{k=0}^n\pqbinomial{n}{k}{p}{q}(b\ominus a)_{p,q}^{n-k}(a\ominus x)_{p,q}^{k},\label{conecc4}
\end{eqnarray}

\end{corollary}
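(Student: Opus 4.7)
The plan is to derive both connection formulas as direct applications of the two $(p,q)$-Taylor expansions (\ref{pqtaylor}) and (\ref{pqtaylor2}), with $f(x)$ chosen to be the $(p,q)$-power on the left-hand side.

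For the first identity, I would take $f(x)=(x\ominus b)_{p,q}^{n}$, which is a polynomial of degree $n$ in $x$. Applying (\ref{derule3}) with this $f$ gives
\[
\Dpq^{k}f(x)=p^{\binom{k}{2}}\frac{[n]_{p,q}!}{[n-k]_{p,q}!}(p^{k}x\ominus b)_{p,q}^{n-k},
\]
so that the evaluation at $x=ap^{-k}$ required by (\ref{pqtaylor}) simplifies cleanly:
\[
\left(\Dpq^{k}f\right)(ap^{-k})=p^{\binom{k}{2}}\frac{[n]_{p,q}!}{[n-k]_{p,q}!}(a\ominus b)_{p,q}^{n-k}.
\]
Plugging this into (\ref{pqtaylor}), the factor $p^{-\binom{k}{2}}$ cancels against $p^{\binom{k}{2}}$, and the ratio of $(p,q)$-factorials combines with $1/[k]_{p,q}!$ to produce $\pqbinomial{n}{k}{p}{q}$, yielding (\ref{conecc3}).

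For (\ref{conecc4}), I would mirror the argument using the second Taylor formula (\ref{pqtaylor2}) applied to $f(x)=(b\ominus x)_{p,q}^{n}$. By (\ref{derule4}),
\[
\Dpq^{k}f(x)=(-1)^{k}q^{\binom{k}{2}}\frac{[n]_{p,q}!}{[n-k]_{p,q}!}(b\ominus q^{k}x)_{p,q}^{n-k},
\]
so $\left(\Dpq^{k}f\right)(aq^{-k})=(-1)^{k}q^{\binom{k}{2}}\frac{[n]_{p,q}!}{[n-k]_{p,q}!}(b\ominus a)_{p,q}^{n-k}$. Feeding this into (\ref{pqtaylor2}), the signs $(-1)^{k}(-1)^{k}$ and the powers $q^{-\binom{k}{2}}q^{\binom{k}{2}}$ both trivialize, and the $(p,q)$-factorials again regroup as $\pqbinomial{n}{k}{p}{q}$.

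There is no real obstacle here; the only point of care is verifying the two evaluations at the shifted arguments, namely that $p^{k}\cdot ap^{-k}=a$ in the first case and $q^{k}\cdot aq^{-k}=a$ in the second, so that the inner $(p,q)$-powers collapse to $(a\ominus b)_{p,q}^{n-k}$ and $(b\ominus a)_{p,q}^{n-k}$ respectively. Once this is noted, both identities drop out in one line of bookkeeping from the corresponding Taylor expansion, and the corollary follows.
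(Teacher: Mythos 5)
Your proof is correct and is exactly the intended derivation: the paper states this corollary immediately after the two $(p,q)$-Taylor theorems precisely so that it follows by applying (\ref{pqtaylor}) to $f(x)=(x\ominus b)_{p,q}^{n}$ via (\ref{derule3}) and (\ref{pqtaylor2}) to $f(x)=(b\ominus x)_{p,q}^{n}$ via (\ref{derule4}). The cancellations of $p^{\pm\binom{k}{2}}$, $q^{\pm\binom{k}{2}}$ and $(-1)^{k}$, and the collapse of the shifted arguments to $(a\ominus b)_{p,q}^{n-k}$ and $(b\ominus a)_{p,q}^{n-k}$, are all as you describe.
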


\begin{remark}
If one takes $b=ab$ in (\ref{conecc3}), then one gets
\[(x\ominus ab)_{p,q}^{n}=\sum_{k=0}^{n}\pqbinomial{n}{k}{p}{q}a^{n-k}(1\ominus b)_{p,q}^{n-k}(x\ominus a)_{p,q}^{k}.\]
Now, take $x=1$ and $p=1$, the following well known $q$-binomial theorem follows 
\begin{equation}
(ab;q)_n=\sum_{k=0}^{n}\qbinomial{n}{k}{q}a^{n-k}(b;q)_{n-k}(a;q)_{k}.\label{qbin}
\end{equation}
Then, (\ref{conecc3}) is an obvious generalization of (\ref{qbin}).
\end{remark}

\begin{corollary}
The following expansion holds.
\begin{eqnarray}
\dfrac{1}{(1\ominus x)_{p,q}^n}&=&1+\sum_{j=0}^{\infty}\dfrac{p^{j-\binom{j}{2}}[n]_{p,q}[n+1]_{p,q}\cdots [n+j-1]_{p,q}}{[j]_{p,q}!}x^n\nonumber \\
&=& 1+\sum_{j=0}^{\infty}\pqbinomial{n+j-1}{j}{p}{q}p^{j-\binom{j}{2}}{x^j},\label{heine}
\end{eqnarray}
\end{corollary}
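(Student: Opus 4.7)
The plan is to apply the $(p,q)$-Taylor formula (\ref{pqtaylor}) to $f(x)=1/(1\ominus x)_{p,q}^n$ expanded at $a=0$. Since $(x\ominus 0)_{p,q}^k=p^{\binom{k}{2}}x^k$, the factor $p^{-\binom{k}{2}}$ inside the Taylor formula is absorbed and the expansion reduces to the familiar power-series form
\[f(x)=\sum_{k\ge 0}\frac{(\Dpq^k f)(0)}{[k]_{p,q}!}\,x^k.\]
Hence the proof amounts to computing the iterated $(p,q)$-derivatives $(\Dpq^k f)(0)$.

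The first step is an induction on $j$ that establishes the closed form
\[\Dpq^{j}\frac{1}{(1\ominus x)_{p,q}^n}=\frac{p^{\binom{j+1}{2}}\,[n]_{p,q}[n+1]_{p,q}\cdots [n+j-1]_{p,q}}{(1\ominus p^{j}x)_{p,q}^{n+j}}.\]
The base case $j=1$ is exactly (\ref{r3}). For the inductive step I would combine (\ref{r3}) (with $n$ replaced by $n+j$) with the elementary scaling rule $\Dpq [g(cx)]=c\,(\Dpq g)(cx)$, which is immediate from the definition of $\Dpq$. Applied with $c=p^{j}$, this produces a factor $p^{j+1}[n+j]_{p,q}/(1\ominus p^{j+1}x)_{p,q}^{n+j+1}$, and the identity $\binom{j+1}{2}+(j+1)=\binom{j+2}{2}$ promotes the power of $p$ to the next level.

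Specialising $x=0$ yields a closed expression for $(\Dpq^{j}f)(0)$ in terms of a product of $(p,q)$-numbers and an explicit power of $p$. Dividing by $[j]_{p,q}!$ and using
\[\frac{[n]_{p,q}[n+1]_{p,q}\cdots [n+j-1]_{p,q}}{[j]_{p,q}!}=\pqbinomial{n+j-1}{j}{p}{q}\]
rearranges the coefficient into the $(p,q)$-binomial form appearing in (\ref{heine}). A bookkeeping of the accumulated powers of $p$ finishes the identity.

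The main obstacle is foundational rather than computational: the Taylor formula (\ref{pqtaylor}) was proved only for polynomials, whereas $1/(1\ominus x)_{p,q}^n$ is not one. The cleanest way around this is to read the identity at the level of formal power series in $x$, since both sides agree at $x=0$ and have matching iterated $(p,q)$-derivatives; equivalently one justifies convergence on a disk avoiding the singularities of $1/(1\ominus x)_{p,q}^n$ and then passes to the analytic identity. A short extension of Theorem 2 to this class of functions — or, alternatively, a direct induction on $n$ exploiting the factorisation of $(1\ominus x)_{p,q}^n$ via (\ref{expand1}) — closes this gap.
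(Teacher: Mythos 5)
Your overall strategy---specialise the first $(p,q)$-Taylor formula at $a=0$, so that $(x\ominus 0)_{p,q}^k=p^{\binom{k}{2}}x^k$ cancels the prefactor and the problem reduces to computing $(\Dpq^jf)(0)$ for $f(x)=1/(1\ominus x)_{p,q}^n$---is the natural one (it parallels the derivation of Heine's formula in the $q$-case), and your induction giving
\[
\Dpq^{j}\frac{1}{(1\ominus x)_{p,q}^n}=\frac{p^{\binom{j+1}{2}}[n]_{p,q}[n+1]_{p,q}\cdots[n+j-1]_{p,q}}{(1\ominus p^{j}x)_{p,q}^{n+j}}
\]
is correct (it checks out directly for $j=1,2$), as is your treatment of the foundational point about applying a polynomial Taylor formula to a non-polynomial by working with formal power series.

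The genuine gap is the step you dismiss as ``bookkeeping''. Evaluating at $x=0$ is not free of powers of $p$: with the paper's definition $(a\ominus x)_{p,q}^{m}=\prod_{k=0}^{m-1}(p^{k}a-q^{k}x)$ one has $(1\ominus 0)_{p,q}^{n+j}=p^{\binom{n+j}{2}}$, so your closed form yields
\[
\frac{(\Dpq^{j}f)(0)}{[j]_{p,q}!}=p^{\binom{j+1}{2}-\binom{n+j}{2}}\pqbinomial{n+j-1}{j}{p}{q},
\]
an exponent that depends on $n$ and does not reduce to the $p^{j-\binom{j}{2}}$ appearing in (\ref{heine}); moreover $f(0)=p^{-\binom{n}{2}}\neq 1$. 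This mismatch is not an error in your induction: for $n=1$ the left-hand side is $1/(1-x)=\sum_{j}x^{j}$ with all coefficients equal to $1$, whereas (\ref{heine}) would give $p^{j-\binom{j}{2}}$ (for instance $p$ at $j=2$), and for $n=2$ a direct partial-fraction computation gives the coefficient $[j+1]_{p,q}\,p^{-(j+1)}$. Carried out honestly, your method therefore proves
\[
\frac{1}{(1\ominus x)_{p,q}^n}=\sum_{j=0}^{\infty}p^{\binom{j+1}{2}-\binom{n+j}{2}}\pqbinomial{n+j-1}{j}{p}{q}\,x^{j},
\]
which disagrees with the corollary as printed (which in addition double-counts the constant term by writing $1+\sum_{j=0}^{\infty}$ and has $x^{n}$ where $x^{j}$ is meant). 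You must either reconcile your derivative formula with the stated exponent---which cannot be done under the paper's definitions---or record the corrected identity; as written, the proposal stops one line short of the only nontrivial verification, and that is precisely where the argument fails to deliver the statement as given.
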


%\begin{proof}
%Consider the function $f(x)=\dfrac{1}{(1\ominus x)_{p,q}^n}$. From (\ref{r3}), we have 
%\[\Dpq f(x)=\Dpq \dfrac{1}{(1\ominus x)_{p,q}^n}=\dfrac{p \npq}{(1\ominus x)_{p,q}^{n+1}},\]
%and by induction,
%\[\Dpq^j f(x)=\dfrac{p^j[n]_{p,q}[n+1]_{p,q}\cdots [n+j-1]_{p,q}}{(1\ominus x)_{p,q}^{n+j}}.\]
%Hence $\displaystyle{(\Dpq^jf)(0)=p^{j-\binom{j}{2}}[n]_{p,q}[n+1]_{p,q}\cdots [n+j-1]_{p,q}}$ for any $j\geq 1$ and hence the formula follows.
%\end{proof}
\noindent Note that (\ref{heine}) is the $(p,q)$-analogue of the Taylor's expansion of $f(x)=\dfrac{1}{(1-x)^n}$ in ordinary calculus.
Note also that when $p\to 1$, (\ref{heine}) becomes the well known Heine's binomial formula.

\section{The $(p,q)$-antiderivative and the $(p,q)$-integral}

\subsection{The $(p,q)$-antiderivative}

\noindent The function $F(x)$ is a $(p,q)$-antiderivative of $f(x)$ if  $\Dpq F(x)=f(x)$. It is denoted by
\begin{equation}
\int f(x)\dpq x.
\end{equation}

\noindent Note that we say "a" $(p,q)$-antiderivative instead of "the"  $(p,q)$-antiderivative, because, as in ordinary calculus, an antiderivative is not unique. In ordinary calculus, the uniqueness is up to a constant since the derivative of a function vanishes if and only if it is a constant. The situation in the twin basic quantum calculus is more subtle. $\Dpq\varphi(x)=0$ if and only if $\varphi(px)=\varphi(qx)$, which does not necessarily imply $\varphi$ a constant. If we require $\varphi$ to be a formal power series, the condition $\varphi(px)=\varphi(qx)$ implies $p^nc_n=q^nc_n$ for each $n$, where $c_n$ is the coefficient of $x^n$. It is possible only when $c_n=0$ for any $n\geq 1$, that is, $\varphi$ is constant. Therefore, if 
\[f(x)=\sum_{n=0}^{\infty}a_nx^n\]
is a formal power series, then among formal power series, $f(x)$ has a unique $(p,q)$-antiderivative up to a constant term, which is 
\begin{equation}
\int f(x)\dpq x=\sum_{n=0}^{\infty}\dfrac{a_n x^{n+1}}{[n+1]_{p,q}}+C.
\end{equation}

\subsection{The $(p,q)$-integral}

We define the inverse of the $(p,q)$-differentiation called the $(p,q)$-integration. Let $f(x)$ be an arbitrary function and $F(x)$ be a function such that $\Dpq F(x)=f(x)$, then
\[\dfrac{F(px)-F(qx)}{(p-q)x}=f(x).\]
Therefore, $F(px)-F(qx)= \varepsilon x f(x)$ where $\varepsilon=(p-q)$. This relation leads to the formula 
\begin{eqnarray*}
F\left(p^{1}q^{-1}x\right)-F\left(p^{0}q^{-0}x\right)&=& \varepsilon p^{0}q^{-1}x f\left(p^{0}q^{-1}x\right)\\
F\left(p^{2}q^{-2}x\right)-F\left(p^{1}q^{-1}x\right)&=& \varepsilon p^{1}q^{-2}x f\left(p^{1}q^{-2}x\right)\\
F\left(p^{3}q^{-3}x\right)-F\left(p^{2}q^{-2}x\right)&=& \varepsilon p^{2}q^{-3}x f\left(p^{2}q^{-3}x\right)\\
&\vdots& \\
F\left(p^{n+1}q^{-(n+1)}x\right)-F\left(p^{n}q^{-n}x\right)&=& \varepsilon p^{n}q^{-(n+1)}x f\left(p^{n}q^{-(n+1)}x\right)\\
\end{eqnarray*}
By adding these formulas terms by terms, we obtain 
\[F\left(p^{n+1}q^{-(n+1)}x\right)-F(x)=(p-q)x\sum_{k=0}^{n}f\left(p^{k}q^{-(k+1)}x\right).\]
Assuming $\left|\dfrac{p}{q}\right|<1$ and letting $n\to\infty$, we have 
\[F(x)-F(0)=(q-p)x\sum_{k=0}^{\infty}\frac{p^{k}}{q^{k+1}}f\left(\frac{p^{k}}{q^{k+1}}x\right).\]
Similarly, for $\left|\dfrac{p}{q}\right|>1$, we have 
\[F(x)-F(0)=(p-q)x\sum_{k=0}^{\infty}\frac{q^{k}}{p^{k+1}}f\left(\frac{q^{k}}{p^{k+1}}x\right).\]
Therefore, we give the following definition.

%In what follows, unless otherwise stated, we will always assume that $0<\left|\dfrac{q}{p}\right|<1$, so the definition of the $(p,q)$-integral we will use is 
\begin{definition} Let $f$ be an arbitrary function. We define the $(p,q)$-integral of $f$ as follows:
\begin{equation}\label{pqantider}
\int f(x)\dpq x=(p-q)x\sum_{k=0}^{\infty}\frac{q^{k}}{p^{k+1}}f\left(\frac{q^{k}}{p^{k+1}}x\right).
\end{equation}
\end{definition}

\begin{remark}
Note that this is a formal definition since the we do not care about the convergence of the right hand side of (\ref{pqantider}).
\end{remark}

\noindent From this definition, one  easily derives a more general formula
\begin{eqnarray*}
\int f(x)\Dpq g(x)\dpq x&=& (p-q)x\sum_{k=0}^{\infty}\frac{q^{k}}{p^{k+1}}f\left(\frac{q^{k}}{p^{k+1}}x\right)\Dpq g\left(\frac{q^{k}}{p^{k+1}}x\right)\\
&=&(p-q)x\sum_{k=0}^{\infty}\frac{q^{k}}{p^{k+1}}f\left(\frac{q^{k}}{p^{k+1}}x\right)\dfrac{ g\left(\frac{q^{k}}{p^{k}}x\right)-g\left(\frac{q^{k+1}}{p^{k+1}}x\right)}{(p-q)\frac{q^{k}}{p^{k+1}}x}\\
&=&\sum_{k=0}^{\infty}f\left(\frac{q^{k}}{p^{k+1}}x\right)\left(g\left(\frac{q^{k}}{p^{k}}x\right)-g\left(\frac{q^{k+1}}{p^{k+1}}x\right)   \right),
\end{eqnarray*}
or otherwise stated
\begin{equation}
\int f(x)\dpq g(x)=\sum_{k=0}^{\infty}f\left(\frac{q^{k}}{p^{k+1}}x\right)\left(g\left(\frac{q^{k}}{p^{k}}x\right)-g\left(\frac{q^{k+1}}{p^{k+1}}x\right)   \right).
\end{equation}

\noindent We have merely derived (\ref{pqantider}) formally and have yet to examine under what conditions it really converges to a $(p,q)$-antiderivetive. The theorem below gives a sufficient condition for this.

\begin{theorem}\label{pqinttheo}
Suppose $0<\dfrac{q}{p}<1$. If $|f(x)x^\alpha|$ is bounded on the interval $(0,A]$ for some $0\leq \alpha<1$, then the $(p,q)$-integral (\ref{pqantider}) converges to a function $F(x)$ on $(0,A]$, which is a $(p,q)$-antiderivative of $f(x)$. Moreover, $F(x)$ is continuous at $x=0$ with $F(0)=0$.
\end{theorem}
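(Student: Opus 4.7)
The plan is to verify three things in sequence: (i) the series defining $F(x)$ converges absolutely and uniformly on $(0,A]$; (ii) the resulting $F$ satisfies $\Dpq F(x)=f(x)$; (iii) $\lim_{x\to 0^+}F(x)=0$, so that the extension $F(0):=0$ is continuous at the origin. All three rely on a single geometric-series estimate, so the heart of the argument is essentially one computation.

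First I would fix a constant $M>0$ with $|f(y)|\,y^\alpha\le M$ on $(0,A]$. For $x\in(0,A]$, write $y_k=\frac{q^k}{p^{k+1}}x$; since $0<q/p<1$, the sequence $y_k$ lies in the range where the bound applies. Bounding the general term of
$$F(x)=(p-q)x\sum_{k=0}^{\infty}\frac{q^k}{p^{k+1}}f(y_k)$$
in absolute value gives
$$\left|(p-q)\,y_k\,f(y_k)\right|\le (p-q)\,M\,y_k^{\,1-\alpha}=(p-q)\,M\,p^{\alpha-1}\,x^{1-\alpha}\left(\tfrac{q}{p}\right)^{k(1-\alpha)}.$$
Because $1-\alpha>0$ and $q/p<1$, this is a convergent geometric series, and the bound is uniform in $x\in(0,A]$ after replacing $x^{1-\alpha}$ by $A^{1-\alpha}$. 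This yields absolute and uniform convergence on $(0,A]$.

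Next I would compute $\Dpq F(x)=[F(px)-F(qx)]/[(p-q)x]$ directly from the series. Under the substitution $x\mapsto px$ the argument $\frac{q^k}{p^{k+1}}x$ becomes $\bigl(\tfrac{q}{p}\bigr)^{k}x$, and under $x\mapsto qx$ it becomes $\bigl(\tfrac{q}{p}\bigr)^{k+1}x$. Absolute convergence (step one) permits re-indexing the second series by $k\mapsto k-1$, after which the two series telescope, leaving only the $k=0$ term $(p-q)x\,f(x)$. Division by $(p-q)x$ then gives $\Dpq F(x)=f(x)$, confirming that $F$ is a $(p,q)$-antiderivative. Finally, the same pointwise bound above yields $|F(x)|\le C\,x^{1-\alpha}$ with $C$ independent of $x$, so $F(x)\to 0$ as $x\to 0^+$; setting $F(0)=0$ makes $F$ continuous at the origin.

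The main obstacle is not really a conceptual one but rather a bookkeeping subtlety: one must justify the telescoping of the two series obtained from $F(px)$ and $F(qx)$, i.e., the re-indexing shift $k\mapsto k-1$. This is exactly where absolute convergence is needed, which is why the geometric estimate in step one is placed first. Everything else is routine manipulation of the exponent $\bigl(\tfrac{q}{p}\bigr)^{k(1-\alpha)}$.
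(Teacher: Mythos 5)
Your proof is correct and takes essentially the same route as the paper: the same geometric-series bound $Mp^{\alpha-1}x^{-\alpha}\left(\frac{q}{p}\right)^{k(1-\alpha)}$ on the general term, the same telescoping computation of $\frac{F(px)-F(qx)}{(p-q)x}$ to get $\Dpq F=f$, and the same $O(x^{1-\alpha})$ estimate to conclude continuity at $x=0$. No substantive differences to report.
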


\begin{proof}
Let us assume that $|f(x)x^\alpha|<M$ on $(0,A]$. For any $0<x<A$, $j\geq 0$, 
\[\left|f\left(\frac{q^j}{p^{j+1}}x\right)\right|<M\left(\dfrac{q^j}{p^{j+1}}x\right)^{-\alpha}.\]
Thus, for $0<x\leq A$, we have 
\begin{equation}\label{prof1}
\left|\dfrac{q^j}{p^{j+1}}f\left(\frac{q^j}{p^{j+1}}x\right)\right|<M\dfrac{q^j}{p^{j+1}}\left(\dfrac{q^j}{p^{j+1}}x\right)^{-\alpha}=Mp^{\alpha-1} x^{-\alpha}\left[\left(\dfrac{q}{p}\right)^{1-\alpha}\right]^j.\end{equation}
Since, $1-\alpha>0$ and $0<\dfrac{q}{p}<1$, we see that our series is bounded above by a convergent geometric series. Hence, the right-hand size of (\ref{pqantider}) converges point-wise  to some function $F(x)$. It follows directly from (\ref{pqantider}) that  $F(0)=0$. The fact that $F(x)$ is continuous at $x=0$, that is $F(x)$ tends to zero as $x\to 0$, is clear if we consider, using  (\ref{prof1})
\[\left| (p-q)x\sum_{k=0}^{\infty}\frac{q^{k}}{p^{k+1}}f\left(\frac{q^{k}}{p^{k+1}}x\right)   \right|<\dfrac{M(p-q)x^{1-\alpha}}{p^{1-\alpha}-{q}^{1-\alpha}},\quad 0<x\leq A.\]
In order to check that $F(x)$ is a $(p,q)$-antiderivative we $(p,q)$-differentiate it:
\begin{eqnarray*}
\Dpq F(x)&=& \dfrac{1}{(p-q)x}\left( (p-q)px\sum_{k=0}^{\infty}\frac{q^{k}}{p^{k+1}}f\left(\frac{q^{k}}{p^{k+1}}px\right) \right.\\
&&\hspace{3cm}\left. -(p-q)qx\sum_{k=0}^{\infty}\frac{q^{k}}{p^{k+1}}f\left(\frac{q^{k}}{p^{k+1}}qx\right)\right)\\
&=& \sum_{k=0}^{\infty}\frac{q^{k}}{p^{k}}f\left(\frac{q^{k}}{p^{k}}x\right)-\sum_{k=0}^{\infty}\frac{q^{k+1}}{p^{k+1}}f\left(\frac{q^{k+1}}{p^{k+1}}x\right)\\
&=& \sum_{k=0}^{\infty}\frac{q^{k}}{p^{k}}f\left(\frac{q^{k}}{p^{k}}x\right)-\sum_{k=1}^{\infty}\frac{q^{k}}{p^{k}}f\left(\frac{q^{k}}{p^{k}}x\right)\\
&=& f(x).
\end{eqnarray*}
Note that if $x\in(0,A]$ and $0<\dfrac{q}{p}<1$, then $\dfrac{q}{p}x\in (0,A]$, and the $(p,q)$-differentiation is valid.
\end{proof}

\begin{remark}
Note that if the assumption of (\ref{pqinttheo}) is satisfied, the $(p,q)$-integral gives the unique $(p,q)$-antiderivative that is continuous at $x=0$, up to a constant. On the other hand, if we know that $F(x)$ is a $(p,q)$-antiderivative of $f(x)$ and $F(x)$ is continuous at $x=0$, $F(x)$ must be given, up to a constant, by (\ref{pqantider}), since a partial sum of the $(p,q)$-integral is 
\begin{eqnarray*}
(p-q)x\sum_{j=0}^{N}\frac{q^j}{p^{j+1}}f\left(\frac{q^j}{p^{j+1}}x\right)&=&(p-q)x\sum_{j=0}^{N}\frac{q^j}{p^{j+1}} \Dpq\left. F(t)\right|_{t=\frac{q^j}{p^{j+1}}x}\\
&=& (p-q)x\sum_{j=0}^N\frac{q^j}{p^{j+1}}\left(\dfrac{F\left(\frac{q^j}{p^j}x\right)-F\left(\frac{q^{j+1}}{p^{j+1}}x\right)}{(p-q)\frac{q^j}{p^{j+1}}x}  \right)\\
&=& \sum_{j=0}^N\left(F\left(\frac{q^j}{p^j}x\right)-F\left(\frac{q^{j+1}}{p^{j+1}}x\right)\right)\\
&=&F(x)-F\left(\frac{q^{N+1}}{p^{N+1}}x\right)
\end{eqnarray*} 
which tends to $F(x)-F(0)$ as $N$ tends to $\infty$, by the continuity of $F(0)$ at $x=0$.
\end{remark}
\noindent Let us emphasize on an example where the $(p,q)$-derivative fails. Consider \\$f(x)=\dfrac{1}{x}$. Since
\begin{equation}
\Dpq \ln x=\dfrac{\ln px-\ln qx}{(p-q)x}=\dfrac{\ln p-\ln q}{p-q}\dfrac{1}{x},
\end{equation}
we have 
\begin{equation}
\int \dfrac{1}{x}\dpq x=\dfrac{p-q}{\ln p-\ln q}\ln x.
\end{equation}
However, the formula (\ref{pqantider}) gives 
\[\int \dfrac{1}{x}\dpq x=(p-q)\sum_{j=0}^{\infty}1=\infty.\]
The formula fails because $f(x)x^\alpha$ is not bounded for any $0\leq \alpha<1$. Note that $\ln x$ is not continuous at $x=0$.

We now apply formula (\ref{pqantider}) to define the definite $(p,q)$-integral.

\begin{definition}
Let $f$ be an arbitrary function and $a$ be a real number, we set
\begin{eqnarray}
\int_{0}^{a}f(x)\dpq x&=&
(q-p)a\sum\limits_{k=0}^{\infty}\frac{p^{k}}{q^{k+1}}f\left(\frac{p^{k}}{q^{k+1}}a\right)\quad \textrm{if }\quad \left|\dfrac{p}{q}\right|<1\label{pqint1}\\
\int_{0}^{a}f(x)\dpq x&=&(p-q)a\sum\limits_{k=0}^{\infty}\frac{q^{k}}{p^{k+1}}f\left(\frac{q^{k}}{p^{k+1}}a\right)  \quad \textrm{if }\quad \left|\dfrac{p}{q}\right|>1.\label{pqint2}
\end{eqnarray}
\end{definition}

%\begin{example}
%Let us compute the $(p,q)$-integral of the function $f(x)=x^n$. We take the case where $\left|\dfrac{p}{q}\right|<1$, the other case being similar.
%\begin{eqnarray*}
%\int_{0 }^{a} x^n\dpq x&=& (q-p)a\sum\limits_{k=0}^{\infty}\frac{p^{k}}{q^{k+1}}\left(\frac{p^{k}}{q^{k+1}}a\right)^n\\
%&=&\frac{q-p}{q^n}\frac{1}{1-\frac{p^{n+1}}{q^{n+1}}}a^{n+1}\\
%&=&\frac{q-p}{q^{n+1}-p^{n+1}}a^{n+1}\\
%&=& \dfrac{a^{n+1}}{[n+1]_{p,q}}.
%\end{eqnarray*}
%\end{example}
%
%\begin{example}
%For $g(x)=\epq(x)=\sum\limits_{n=0}^{\infty}\dfrac{x^n}{[n]_{p,q}!}.$
%\begin{eqnarray*}
%\int_{0}^{a}\epq(x)\dpq x&=&\sum_{n=0}^{\infty}\int_{0}^a\frac{x^n}{[n]_{pq}!}\dpq x\\
%&=& \sum_{n=0}^{\infty}\dfrac{a^{n+1}}{[n]_{p,q}![n+1]_{p,q}}=\sum_{n=0}^{\infty}\dfrac{a^{n+1}}{[n+1]_{p,q}!}\\
%&=&\epq(x)-1.
%\end{eqnarray*}
%\end{example}

\begin{remark}
Note that for $p=1$, the definition (\ref{pqint2}) reduces to the well known Jackson integral (see \cite[P.\ 67]{kac})
\[\int f(x)d_qx=(1-q)x\sum_{k=0}^{\infty}q^kf(q^kx).\]
For $p=r^{1/2}$, $q=s^{-1/2}$, 
\[\left|\frac{p}{q}\right|<1\iff |rs|<1,\] and the formula (\ref{pqint1}) reads 
\[\int_{0}^{a}f(x)\dpq x=(s^{-1/2}-r^{1/2})a\sum\limits_{k=0}^{\infty}{r^{k/2}}s^{(k+1)/2}f\left({r^{k/2}}{s^{(k+1)/2}}a\right),\]
which is the formula (11) given in \cite{burban}.
Once more, for 
For $p=r^{1/2}$, $q=s^{-1/2}$, 
\[\left|\frac{p}{q}\right|>1\iff |rs|>1,\] and the formula (\ref{pqint2}) reads 
\[\int_{0}^{a}f(x)\dpq x=(r^{1/2}-s^{-1/2})a\sum\limits_{k=0}^{\infty}{s^{-k/2}}{r^{-(k+1)/2}}f\left({s^{-k/2}}{r^{-(k+1)/2}}a\right),\]
which is the formula (10) given in \cite{burban}.
\end{remark}

\begin{definition}
Let $f$ be an arbitrary function  $a$ and $b$ be two nonnegative numbers such that $a<b$, then we set 
\begin{equation}
\int_{a}^{b}f(x)\dpq x=\int_{0}^{b}f(x)\dpq x-\int_{0}^{a}f(x)\dpq x.
\end{equation}
\end{definition}

We cannot obtain a good definition of improper integral by simply letting $a\to\infty$ in (\ref{pqint2}). Instead, since 
\begin{eqnarray*}
\int_{{q^{j+1}}/{p^{j+1}}}^{{q^{j}}/{p^{j}}}f(x)\dpq x&=&\int_{0}^{\frac{q^{j}}{p^{j}}}f(x)\dpq x-\int_{0}^{\frac{q^{j+1}}{p^{j+1}}}f(x)\dpq x\\
&=&(p-q)\left\{ \sum\limits_{k=0}^{\infty}\frac{q^{k+j}}{p^{k+1+j}}f\left(\frac{q^{k+j}}{p^{k+1+j}}\right)   -\sum\limits_{k=0}^{\infty}\frac{q^{k+j+1}}{p^{k+j+2}}f\left(\frac{q^{k+j+1}}{p^{k+j+2}}\right)\right\}\\
&=&(p-q)\dfrac{q^j}{p^{j+1}}f\left(\frac{q^{j}}{p^{j+1}}\right),
\end{eqnarray*}
it is natural to define the improper $(p,q)$-integral as follows.

\begin{definition}
The improper $(p,q)$-integral of $f(x)$ on $[0;+\infty)$ is defined to be 
\begin{eqnarray}
\int_0^{\infty}f(x)\dpq x&=&\sum_{j=-\infty}^{\infty}\int_{{q^{j+1}}/{p^{j+1}}}^{{q^{j}}/{p^{j}}}f(x)\dpq x\nonumber\\
&=& (p-q)\sum_{j=-\infty}^{\infty}\dfrac{q^j}{p^{j+1}}f\left(\frac{q^{j}}{p^{j+1}}\right)\label{improper1}
\end{eqnarray} 
if $0<\dfrac{q}{p}<1$ or 
\begin{eqnarray}
\int_0^{\infty}f(x)\dpq x&=&(q-p)\sum_{j=-\infty}^{\infty}\int_{{q^{j}}/{p^{j}}}^{{q^{j+1}}/{p^{j+1}}}f(x)\dpq x
\end{eqnarray} 
if $\dfrac{q}{p}>1$ where the formula  is used.

\end{definition}

\begin{proposition} Suppose that $0<\dfrac{q}{p}<1$.
The improper $(p,q)$-integral defined above converges if $x^\alpha f(x)$ is bounded in a neighbourhood of $x=0$ with $\alpha <1$ and for sufficiently large $x$ with some $\alpha>1$.
\end{proposition}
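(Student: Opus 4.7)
The plan is to split the bi-infinite series in (\ref{improper1}) at $j=0$ and dominate each tail by a convergent geometric series, using one of the two boundedness hypotheses for each tail. Let me write $r=q/p$, so by assumption $0<r<1$, and denote the sample points $t_j=q^j/p^{j+1}=r^j/p$. Observe that $t_j\to 0$ as $j\to+\infty$ and $t_j\to+\infty$ as $j\to-\infty$, so the two tails of the series correspond exactly to the two regions in which we have hypotheses on $f$.

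First I would handle the tail $j\geq 0$, where the $t_j$ lie in a neighbourhood of $0$. By assumption there exist $M_1>0$ and $\alpha_1<1$ such that $|x^{\alpha_1}f(x)|\leq M_1$ on some interval $(0,A]$. Since $t_j\to 0$, we have $t_j\in(0,A]$ for all $j$ sufficiently large, say $j\geq j_0$. Then
\[
\left|\frac{q^j}{p^{j+1}}f\!\left(\frac{q^j}{p^{j+1}}\right)\right| = |t_jf(t_j)| \leq M_1\, t_j^{\,1-\alpha_1} = M_1\, p^{\alpha_1-1}\, r^{j(1-\alpha_1)},
\]
and since $1-\alpha_1>0$ and $0<r<1$ the tail $\sum_{j\geq j_0}r^{j(1-\alpha_1)}$ is a convergent geometric series.

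Next I would handle the tail $j<0$, where $t_j$ is large. By assumption there exist $M_2>0$, $B>0$ and $\alpha_2>1$ with $|x^{\alpha_2}f(x)|\leq M_2$ for $x\geq B$. Since $t_j\to\infty$ as $j\to-\infty$, we have $t_j\geq B$ for all $j\leq -j_1$ with some $j_1>0$. Writing $k=-j$, the same computation yields
\[
|t_jf(t_j)| \leq M_2\, t_j^{\,1-\alpha_2} = M_2\, p^{\alpha_2-1}\, r^{-j(1-\alpha_2)} = M_2\, p^{\alpha_2-1}\, r^{k(\alpha_2-1)},
\]
and $\sum_{k\geq j_1} r^{k(\alpha_2-1)}$ converges because $\alpha_2-1>0$ and $r<1$.

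Finally, only finitely many indices $j$ with $-j_1<j<j_0$ remain, and for those the summands are just finitely many finite numbers. Summing the two geometric bounds with this finite central part shows that $\sum_{j=-\infty}^{\infty}\frac{q^j}{p^{j+1}}|f(q^j/p^{j+1})|$ is finite, so (\ref{improper1}) converges absolutely. The step that required the slightest care is simply keeping track of which hypothesis applies on which tail, and noting that the common ratio $r=q/p<1$ makes \emph{both} one-sided tails geometric (for $j\to+\infty$ via the exponent $1-\alpha_1>0$, and for $j\to-\infty$ via the exponent $\alpha_2-1>0$); there is no genuine obstacle beyond this bookkeeping.
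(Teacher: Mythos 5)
Your proof is correct and follows essentially the same route as the paper: split the bi-infinite sum at $j=0$, bound the $j\to+\infty$ tail using the near-zero hypothesis (the paper simply cites Theorem \ref{pqinttheo} for this part, whereas you redo the estimate explicitly) and the $j\to-\infty$ tail using the large-$x$ hypothesis, each by a convergent geometric series. No substantive difference.
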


\begin{proof}
By (\ref{improper1}) we have 
\begin{eqnarray*}
\int_{0}^{\infty}f(x)\dpq x&=&(p-q)\sum_{j=-\infty}^{\infty}\dfrac{q^j}{p^{j+1}}f\left(\frac{q^{j}}{p^{j+1}}\right)\\
&=& (p-q)\left\{ \sum_{j=0}^{\infty}\dfrac{q^j}{p^{j+1}}f\left(\frac{q^{j}}{p^{j+1}}\right)+\sum_{j=1}^{\infty}\dfrac{q^{-j}}{p^{-j+1}}f\left(\frac{q^{-j}}{p^{-j+1}}\right)  \right\}
\end{eqnarray*}
The convergence of the first sum is proved by Theorem \ref{pqinttheo}. For the second sum, suppose for $x$ large we have $|x^\alpha f(x)|<M$ where $\alpha>1$ and $M>0$. Then, we have for sufficiently large $j$,
\begin{eqnarray*}
\left|\dfrac{q^{-j}}{p^{-j+1}}f\left(\frac{q^{-j}}{p^{-j+1}}\right)  \right|&=&p^{\alpha-1}\left(\dfrac{q}{p}\right)^{j(\alpha-1)}\left|\left(\dfrac{q^{-j}}{p^{-j+1}}\right)^{\alpha}f\left(\frac{q^{-j}}{p^{-j+1}}\right)  \right|\\
&<&Mp^{\alpha-1}\left(\dfrac{q}{p}\right)^{j(\alpha-1)}.
\end{eqnarray*}
Therefore, the second sum is also bounded above by a convergent geometric series, and thus converges.
\end{proof}
Note that similar proposition can be stated when $\dfrac{q}{p}>1$.

\begin{definition}
Let $f$ be an arbitrary function and $a$ be a nonnegative real number, then we put 
\begin{eqnarray}
\int_{a}^{\infty}f(x)\dpq x&=&
(q-p)a\sum\limits_{k=0}^{\infty}\frac{p^{-k}}{q^{-(k+1)}}f\left(\frac{p^{-k}}{q^{-(k+1)}}a\right)\quad \textrm{if }\quad \left|\dfrac{p}{q}\right|<1\label{pqint3}\\
\int_{a}^{\infty}f(x)\dpq a&=&(p-q)a\sum\limits_{k=0}^{\infty}\frac{q^{-k}}{p^{-(k+1)}}f\left(\frac{q^{-k}}{p^{-(k+1)}}a\right)  \quad \textrm{if }\quad \left|\dfrac{p}{q}\right|>1.\label{pqint4}
\end{eqnarray}
\end{definition}

\begin{remark}
Combining (\ref{pqint1}) with (\ref{pqint3}) and (\ref{pqint2}) with (\ref{pqint4}) we have for $a=1$
\begin{eqnarray}
\int_{0}^{\infty}f(x)\dpq x&=&
(q-p)\sum\limits_{k=-\infty}^{\infty}\frac{p^{k}}{q^{k+1}}f\left(\frac{p^{k}}{q^{k+1}}\right)\quad \textrm{if }\quad \left|\dfrac{p}{q}\right|<1\label{pqint5}\\
\int_{0}^{\infty}f(x)\dpq x&=&(p-q)\sum\limits_{k=-\infty}^{\infty}\frac{q^{k}}{p^{k+1}}f\left(\frac{q^{k}}{p^{k+1}}\right)  \quad \textrm{if }\quad \left|\dfrac{p}{q}\right|>1.\label{pqint6}
\end{eqnarray}
\end{remark}

\subsection{The fundamental theorem of $(p,q)$-calculus}

In ordinary calculus, a derivative is defined as the limit of a ratio, and a definite integral is defined as the limit of an infinite sum. Their subtle and surprising relation is given by the Newton-Leibniz formula, also called the fundamental theorem of calculus. Following the work done in $q$-calculus, where the  introduction of the definite integral (see \cite{kac}) has been motivated by an antiderivative, the relation between the $(p,q)$-derivative and the $(p,q)$-integral is more obvious. Similarly to the ordinary and the $q$ cases, we have the following fundamental theorem, or $(p,q)$-Newton-Leibniz formula.

\begin{theorem}(Fundamental theorem of $(p,q)$-calculus) If $F(x)$ is an antiderivative of $f(x)$ and $F(x)$ is continuous at $x=0$, we have 
\begin{equation}\label{fundemental}
\int_a^bf(x)\dpq x=F(b)-F(a),
\end{equation}
where $0\leq a<b\leq \infty$.
\end{theorem}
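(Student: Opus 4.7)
The plan is to reduce the general case to the case $\int_0^c f(x)\,\dpq x = F(c)-F(0)$ and then use a telescoping argument on the definition of the $(p,q)$-integral, exactly along the lines of the computation displayed in the remark following Theorem \ref{pqinttheo}.

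First I would use the definition $\int_a^b f(x)\,\dpq x = \int_0^b f(x)\,\dpq x - \int_0^a f(x)\,\dpq x$ to reduce the statement to proving $\int_0^c f(x)\,\dpq x = F(c) - F(0)$ for an arbitrary $c\in[0,\infty)$; combined with the hypothesis that $F$ is continuous at $0$, this collapses the theorem to a single identity. The case $b=\infty$ would be handled separately using the improper-integral definition in (\ref{improper1}) as a limit of the finite-endpoint case.

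Next I would substitute $f=\Dpq F$ into the defining series (\ref{pqint2}) (treating the case $0<q/p<1$; the case $q/p>1$ is symmetric via (\ref{pqint1})). Writing out
\[
\frac{q^{k}}{p^{k+1}}c\cdot\Dpq F\!\left(\frac{q^{k}}{p^{k+1}}c\right)
= \frac{F\!\left(\tfrac{q^k}{p^k}c\right)-F\!\left(\tfrac{q^{k+1}}{p^{k+1}}c\right)}{p-q},
\]
the factor $(p-q)$ in front of the sum cancels, and the partial sum up to $N$ telescopes to
\[
F(c)-F\!\left(\frac{q^{N+1}}{p^{N+1}}c\right).
\]
Since $0<q/p<1$, the argument $(q/p)^{N+1}c$ tends to $0$ as $N\to\infty$, and by continuity of $F$ at $0$ this partial sum tends to $F(c)-F(0)$, establishing the identity.

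The main obstacle is not the telescoping itself, which is essentially mechanical once $f$ is replaced by $\Dpq F$, but rather the justification of passing to the limit: one must ensure that the series defining $\int_0^c f(x)\,\dpq x$ actually converges, so that the telescoping partial sums genuinely represent the integral. For $b<\infty$ this is handled by Theorem \ref{pqinttheo} or directly by the continuity of $F$ at $0$ (which forces $F(q^{N+1}c/p^{N+1})\to F(0)$ regardless of boundedness hypotheses on $f$). For the improper case $b=\infty$ I would additionally need the two-sided series in (\ref{improper1}) to converge; then a completely analogous telescoping on the negative-indexed part yields $F(\infty)-F(c)$, where $F(\infty):=\lim_{N\to\infty}F((q/p)^{-N}c)$ is supplied by the hypothesis of integrability, and adding the two contributions gives $F(b)-F(a)$ with $b=\infty$ interpreted accordingly.
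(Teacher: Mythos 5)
Your proposal is correct and follows essentially the same route as the paper: the paper's proof likewise reduces everything to $\int_0^c f(x)\,\dpq x = F(c)-F(0)$ by invoking the telescoping identity $(p-q)x\sum_{j=0}^{N}\frac{q^j}{p^{j+1}}f\bigl(\frac{q^j}{p^{j+1}}x\bigr)=F(x)-F\bigl(\frac{q^{N+1}}{p^{N+1}}x\bigr)$ (established in the remark following Theorem~\ref{pqinttheo}) together with continuity of $F$ at $0$, and then handles $b=\infty$ through the improper-integral definition (\ref{improper1}). Your explicit attention to the convergence of the defining series is a slight refinement, but the argument is the same.
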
 

\begin{proof}
Since $F(x)$ is continuous at $x=0$, $F(x)$ is given by the formula 
\[F(x)=(p-q)x\sum_{j=0}^{\infty}\dfrac{q^j}{p^{j+1}}f\left(\dfrac{q^j}{p^{j+1}}x\right)+F(0).\]
Since by definition,
\[\int_{0}^{a}f(x)\dpq x=(p-q)a\sum_{j=0}^{\infty}\dfrac{q^j}{p^{j+1}}f\left(\dfrac{q^j}{p^{j+1}}a\right),\]
we have 
\[\int_{0}^{a}f(x)\dpq x=F(a)-F(0).\]
Similarly, we have, for a finite $b$,
\[\int_{0}^{b}f(x)\dpq x=F(b)-F(0),\]
and thus 
\[\int_{a}^{b}f(x)\dpq x=\int_{0}^{b}f(x)\dpq x-\int_{0}^{a}f(x)\dpq x=F(b)-F(a).\]
Putting $a=\dfrac{q^{j+1}}{p^{j+1}}$ and $b=\dfrac{q^{j}}{p^{j}}$ and considering the definition of the improper $(p,q)$-integral (\ref{improper1}), we see that (\ref{fundemental}) is true for $b=\infty$.
\end{proof}

\begin{corollary}\label{cor1}
If $f'(x)$ exists in a neighbourhood  of $x=0$ and is continuous at $x=0$, where $f'(x)$ denotes the ordinary derivative of $f(x)$, we have 
\begin{equation}\label{fundemantal2}
\int_{a}^{b}\Dpq f(x)\dpq x=f(b)-f(a).
\end{equation}
\end{corollary}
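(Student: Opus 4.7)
The plan is to apply the fundamental theorem of $(p,q)$-calculus (the immediately preceding theorem) with the roles $F \leftrightarrow f$ and integrand $f \leftrightarrow \Dpq f$. That is, I will view $f$ itself as a $(p,q)$-antiderivative of $\Dpq f$, which is true by the very definition of the $(p,q)$-antiderivative: $\Dpq f(x) = \Dpq f(x)$, so $f$ satisfies $\Dpq f = \Dpq f$.

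To invoke the fundamental theorem in this setup, the only hypothesis to check is that $f(x)$ is continuous at $x=0$. This is where I will use the assumption of the corollary. Since $f'(x)$ is assumed to exist in a neighborhood of $0$, $f$ is differentiable there, hence continuous at $0$ by standard real-analysis. (The hypothesis that $f'$ is continuous at $0$ is stronger than needed just for this step, but it also ensures that the definition $(\Dpq f)(0) = f'(0)$ is meaningful and behaves consistently under limits; in particular, $\Dpq f$ itself is well-defined in a neighborhood of $0$.)

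Once continuity of $f$ at $0$ is established, I directly apply the fundamental theorem of $(p,q)$-calculus to the pair $(\Dpq f, f)$ on the interval $[a,b]$, yielding
\[
\int_a^b \Dpq f(x)\,\dpq x = f(b) - f(a),
\]
for $0 \le a < b \le \infty$, which is the claim.

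There is essentially no obstacle here; the corollary is a direct specialization of the theorem. The only subtlety worth flagging explicitly in the writeup is that the continuity assumption in the theorem refers to $F$ at $x=0$, and in our application $F = f$, so we must justify continuity of $f$ at $0$ from the hypothesis on $f'$ rather than from any assumption on $\Dpq f$. For the case $b = \infty$, the conclusion should be read via the improper $(p,q)$-integral exactly as in the proof of the preceding theorem, taking $a = q^{j+1}/p^{j+1}$, $b = q^j/p^j$ and summing telescopically.
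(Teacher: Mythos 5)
Your proposal is correct, and it reaches the conclusion by the same overall route as the paper (specializing the fundamental theorem with $F=f$ and integrand $\Dpq f$), but the hypothesis you choose to verify is different from the one the paper verifies. The paper's proof spends its entire effort on the integrand: it applies L'Hospital's rule to show $\lim_{x\to 0}\Dpq f(x)=\lim_{x\to 0}\frac{pf'(px)-qf'(qx)}{p-q}=f'(0)$, so that $\Dpq f$ extends continuously to $x=0$ with value $f'(0)$ --- and this is precisely where the hypothesis that $f'$ is \emph{continuous} at $0$ gets used. You instead check continuity of the antiderivative $f$ at $0$, which is what the fundamental theorem literally requires, and you correctly note that for this step mere differentiability of $f$ near $0$ suffices. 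Both arguments are valid: the telescoping in the proof of the fundamental theorem only needs $F(q^{N+1}x/p^{N+1})\to F(0)$, so your reading is the more faithful application of the theorem as stated. What the paper's extra L'Hospital computation buys is the assurance that the integrand $\Dpq f$ is itself well-defined and bounded near $x=0$ --- relevant because the definite $(p,q)$-integral samples $\Dpq f$ at the points $q^k a/p^{k+1}\to 0$, and boundedness of the integrand near $0$ is the convergence criterion of Theorem \ref{pqinttheo} with $\alpha=0$; your proof gets convergence instead as a byproduct of the telescoping identity. So there is no gap in your argument, only a different (and defensible) allocation of where the hypotheses are consumed.
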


\begin{proof}
Using L'Hospital's rule, we get 
\begin{eqnarray*}
\lim_{x\to 0}\Dpq f(x)&=&\lim_{x\to 0}\frac{f(px)-f(qx)}{(p-q)x}\\
&=& \lim_{x\to 0}\frac{pf'(px)-qf'(qx)}{p-q}=f'(0).
\end{eqnarray*}
Hence $\Dpq f(x)$ can be made continuous at $x=0$ if we define\linebreak $(\Dpq f)(0)=f'(0)$, and (\ref{fundemantal2}) follows from the theorem.
\end{proof}
As the $q$-integral, an important difference between the $(p,q)$-integral an the its ordinary counterpart is that even if we are integrating a function on an interval like $[1;2]$, we have to care about behaviour at $x=0$. This has to do with the definition of the definite $(p,q)$-integral and the condition for the convergence of the $(p,q)$-integral.

Now suppose that $f(x)$ and $g(x)$ are two functions whose ordinary derivatives exists in a neighbourhood of $x=0$. Using the product rule (\ref{productrule2}), we have
\[\Dpq (f(x)g(x))= f(px)\Dpq g(x)+g(qx)\Dpq f(x),\label{prorule2}.\]
Since the product of differentiable functions is also differentiable in ordinary calculus, we can apply (\ref{cor1}) to obtain 
\[f(b)g(b)-f(a)g(a)=\int_a^b f(px)\left(\Dpq g(x)\right)\dpq x+\int_a^b g(qx)\left(\Dpq f(x)\right)\dpq x,\]
or 
\[\int_a^b f(px)\left(\Dpq g(x)\right)\dpq x=f(b)g(b)-f(a)g(a)-\int_a^b g(qx)\left(\Dpq f(x)\right)\dpq x,\]
which is the formula of $(p,q)$-integration by part. Note that $b=\infty$ is allowed.

%The $(p,q)$-integration by part can be applied to obtain the $(p,q)$-Taylor formula with the Cauchy reminder term.
%
%
%\subsection{$(p,q)$-Taylor formulas with $(p,q)$-integral remainder}

%%%%%%%%%%%%%%%%%%%%%%%%%%%%%%%%%%%%

%\section*{Acknowledgement}

\vspace*{3cm}
\flushleft{ {\bf P. Njionou Sadjang}\\ University of Yaounde 1\\ Advanced School of Education \\ P.O Box 47, Yaounde\\ \texttt{pnjionou@yahoo.fr}\\ Ph. +237 99 61 34 40}

\label{lastpage}

\end{document}